\documentclass[a4paper, reqno, 10pt]{amsart}

\usepackage{verbatim}
\usepackage[dvips]{color}
\usepackage[usenames,dvipsnames]{xcolor}
\usepackage{amssymb}
\usepackage[active]{srcltx}
\usepackage[colorlinks,pdfpagelabels,pdfstartview = FitH,bookmarksopen = true,bookmarksnumbered = true,linkcolor = NavyBlue,plainpages = false,hypertexnames = false,citecolor = OliveGreen] {hyperref}
\usepackage{mathtools}
\usepackage{empheq}
\usepackage{a4wide}
\usepackage{aligned-overset}

\makeatletter
\@namedef{subjclassname@2020}{\textup{2020} Mathematics Subject Classification}
\makeatother

\allowdisplaybreaks

\title[Singular elliptic measure data problems with double phase]{Gradient estimates for singular elliptic measure data problems with double phase}

\author[Song]{Kyeong Song}
\address{School of Mathematics,
Korea Institute for Advanced Study, Seoul 02455, Republic of Korea}
\email{kyeongsong@kias.re.kr}

\author[Youn]{Yeonghun Youn}
\address{Department of Mathematics Education,
Incheon National University, Incheon 21999, Republic of Korea}
\email{yeonghunyoun@inu.ac.kr}

\author[Zatorska-Goldstein]{Anna Zatorska-Goldstein}
\address{University of Warsaw, Institute of Applied Mathematics, and Mechanics, ul. Banacha 2, 02-097 Warsaw, Poland}
\email{azator@mimuw.edu.pl}

\subjclass[2020]{35B65;  
35J75; 
35R05; 
35R06. 
}

\keywords{Singular elliptic problem; Double phase; Measure data; SOLA; Gradient estimate}

\newtheorem{theorem}{Theorem}[section]

\newtheorem{lemma}[theorem]{Lemma}

\newtheorem{corollary}[theorem]{Corollary}
\theoremstyle{definition}
\newtheorem{definition}[theorem]{Definition}
\newtheorem{remark}[theorem]{Remark}

\numberwithin{equation}{section}

\def\eqn#1$$#2$${\begin{equation}\label#1#2\end{equation}}
\def\charfn_#1{{\raise1.2pt\hbox{$\chi_{\kern-1pt\lower3pt\hbox{{$\scriptstyle#1$}}}$}}}

\makeatletter
\newcommand{\pushright}[1]{\ifmeasuring@#1\else\omit\hfill$\displaystyle#1$\fi\ignorespaces}
\newcommand{\pushleft}[1]{\ifmeasuring@#1\else\omit$\displaystyle#1$\hfill\fi\ignorespaces}
\makeatother

\DeclareMathOperator*{\osc}{osc}

\newcommand{\divo}{\textnormal{div}}

\DeclareMathOperator*{\data}{{\mathtt{data}}}

\newcommand{\ern}{\mathbb{R}^n}

\def\loc{{\operatorname{loc}}}

\newcommand{\dx}{\,dx}

\newcommand{\means}[1]{-\hskip-1.00em\int_{#1}}

\def\mean#1{\mathchoice%
          {\mathop{\kern 0.2em\vrule width 0.6em height 0.69678ex depth -0.58065ex
                  \kern -0.8em \intop}\nolimits_{\kern -0.4em#1}}%
          {\mathop{\kern 0.1em\vrule width 0.5em height 0.69678ex depth -0.60387ex
                  \kern -0.6em \intop}\nolimits_{#1}}%
          {\mathop{\kern 0.1em\vrule width 0.5em height 0.69678ex
              depth -0.60387ex
                  \kern -0.6em \intop}\nolimits_{#1}}%
          {\mathop{\kern 0.1em\vrule width 0.5em height 0.69678ex depth -0.60387ex
                  \kern -0.6em \intop}\nolimits_{#1}}}

\newcommand{\vertiii}[1]{{\left\vert\kern-0.25ex\left\vert\kern-0.25ex\left\vert #1 
			\right\vert\kern-0.25ex\right\vert\kern-0.25ex\right\vert}}
			
\def\avenorm#1{\mathchoice%
          {\mathop{\kern 0.2em\vrule width 0.6em height 0.69678ex depth -0.58065ex
                  \kern -0.545em \|{#1}\|}}%
          {\mathop{\kern 0.1em\vrule width 0.5em height 0.69678ex depth -0.60387ex
                  \kern -0.495em \|{#1}\|}}%
          {\mathop{\kern 0.1em\vrule width 0.5em height 0.69678ex depth -0.60387ex
                  \kern -0.495em \|{#1}\|}}%
          {\mathop{\kern 0.1em\vrule width 0.5em height 0.69678ex depth -0.60387ex
                  \kern -0.495em \|{#1}\|}}}

\newtoks\by
\newtoks\paper
\newtoks\book
\newtoks\jour
\newtoks\yr
\newtoks\pages
\newtoks\vol
\newtoks\publ

\def\ota{{\hbox{\bf ???}}}
\def\cLear{\by=\ota\paper=\ota\book=\ota\jour=\ota\yr=\ota
\pages=\ota\vol=\ota\publ=\ota}
\def\endpaper{\the\by, \textit{\the\paper},
{\the\jour} \textbf{\the\vol} (\the\yr), \the\pages.\cLear}
\def\endbook{\the\by, \textit{\the\book},
\the\publ, \the\yr.\cLear}
\def\endpap{\the\by, \textit{\the\paper}, \the\jour.\cLear}
\def\endproc{\the\by, \textit{\the\paper}, \the\book, \the\publ,
\the\yr, \the\pages.\cLear}

\begin{document}

\begin{abstract}
We consider elliptic measure data problems of the type
\[ -\mathrm{div}\,(|Du|^{p-2}Du+a(x)|Du|^{q-2}Du) = \mu \]
in a bounded domain in $\mathbb{R}^n$, where $p<q$ and $a(\cdot) \ge 0$. We prove local Calder\'on--Zygmund estimates in the singular case $2-1/n < p < 2$, under natural assumptions on $p$, $q$ and $a(\cdot)$. 
\end{abstract}

\maketitle


\section{Introduction}
In this paper, we study gradient regularity for nonlinear elliptic equations modeled on
\begin{equation}\label{model}
-\mathrm{div}(|Du|^{p-2}Du + a(x)|Du|^{q-2}Du) = \mu \quad \text{in}\;\; \Omega.
\end{equation}
Here, $\Omega \subset \ern$ ($n \ge 2$) is a bounded domain and $\mu$ is a signed Borel measure on $\Omega$ with finite total mass $|\mu|(\Omega)<\infty$; in the following, we will consider $\mu$ as a measure on $\mathbb{R}^n$ by letting $|\mu|(\mathbb{R}^n \setminus \Omega) = 0$. 
We initially assume that $p < q $ and $0 \le a(\cdot) \in L^{\infty}(\Omega)$. 
The leading operator in \eqref{model} naturally appears in the Euler--Lagrange equation of the double phase functional 
\begin{equation}\label{dp.ftnal}
w \mapsto \int_{\Omega} \left(\frac{|Dw|^p}{p} + a(x)\frac{|Dw|^q}{q}\right) \dx.
\end{equation}
This type of functional was first introduced by Zhikov \cite{Zhi86} to describe the behavior of strongly anisotropic materials in the context of homogenization. 
It also provides important examples in the study of the Lavrentiev phenomenon \cite{ Zhi95}. 
The main feature of the functional \eqref{dp.ftnal} is that its energy density exhibits a drastic change of growth/ellipticity depending on the point $x \in \Omega$. 
Namely, the energy density of \eqref{dp.ftnal} has $p$-growth in the gradient on the set $\{a(x)=0\}$, while it exhibits $q$-growth on the set $\{a(x) > 0\}$. 
Indeed, the functional \eqref{dp.ftnal} is one of the prominent examples of problems with nonstandard growth and nonuniform ellipticity, see \cite{MR21} for a well-written summary. 

Regularity for double phase problems was first systematically studied in the pioneering papers \cite{BCM15,BCM18,CM15a, CM15b} by Baroni, Colombo and Mingione, who established maximal regularity (i.e., gradient H\"older regularity) results for local minimizers of functionals modeled on \eqref{dp.ftnal}. 
They controlled the phase transition by using a balance condition between the closeness of $p$ and $q$ and the regularity of $a(\cdot)$, see \eqref{rate.weak.sol}, which is shown to be sharp \cite{ELM}. 
Since then, the regularity theory for double phase problems has been actively investigated in several directions, see for instance \cite{BL22,BO,CD,CM16JFA,DM19,DM20,OSS} and references therein.  
In particular, the papers \cite{BL22, BO,CM16JFA,DM19} are concerned with Calder\'on--Zygmund type gradient estimates for weak solutions to inhomogeneous double phase equations. However, most of the existing results  are confined to the case when the data are sufficiently regular to ensure the existence of weak solutions in the natural energy space, and regularity issues  for double phase problems with measure data have remained largely open.

In this paper, we are interested in gradient integrability results for \eqref{model} in the realm of nonlinear Calder\'on--Zygmund theory. 
Let us first mention some known results in this direction. Consider, for instance, elliptic measure data equations involving the standard $p$-Laplacian:
\begin{equation}\label{measure.p}
-\divo\, (|Du|^{p-2}Du) = \mu.
\end{equation}
Mingione \cite{Min10} employed the 1-fractional maximal function of $\mu$, defined by
\begin{equation}\label{def.M1}
\mathbf{M}_1(\mu)(x) \coloneqq \sup_{r>0} \frac{|\mu|(B_r(x))}{r^{n-1}} \qquad \text{for}\;\; x \in \ern,
\end{equation}
to obtain local gradient estimates for \eqref{measure.p} in the scale of Lorentz--Morrey spaces, when  $p \ge 2$. Phuc \cite{Phuc} further developed this approach to get global weighted gradient estimates, with applications to related Riccati type equations, for the case $p > 2-1/n$.

Subsequently, similar kinds of gradient estimates have been established for elliptic measure data problems with various kinds of nonstandard growth, such as Orlicz growth \cite{BCY21a}, mild phase transition \cite{BCY21b}, and variable exponent growth \cite{BOP17}. 
However, the techniques in \cite{BCY21a, BCY21b, BOP17} cannot be directly applied to the case of double phase operator.
Its nonuniform ellipticity, which is stronger than those of the operators considered in \cite{BCY21b,BOP17}, causes substantial difficulties in establishing regularity estimates below the natural energy space. 
For instance, as typical in nonuniformly elliptic problems, 
several estimates obtained in \cite{CM15b,CM16JFA} involve constants depending on the $L^p$-norm of the gradient, which is no longer available in the setting of measure data problems. 

To the best of our knowledge, gradient regularity for double phase problems with measure data was first investigated in the recent paper \cite{SY} by the first and second authors of this paper. 
Specifically, in \cite{SY}, local Calder\'on--Zygmund type estimates for SOLA (Solutions Obtained as Limits of Approximations) were obtained in the degenerate case $p \ge 2$. 
To avoid the aforementioned $L^p$-norm dependency, the approach of \cite{SY}  employed new reverse H\"older estimates for homogeneous double phase equations, where the involved constants depend only on the $L^{\kappa}$-norm of the gradient for some $\kappa<p$ rather than the $L^{p}$-norm, along with comparison estimates. The paper \cite{SY} also introduced a new structural assumption for the validity of such results, see \eqref{rate.sola}. 
However, some techniques in \cite{SY} are confined to the degenerate case, and the singular case requires additional new ideas; see Section \ref{sec:tech} for details.

The aim of this paper is to establish Calder\'on--Zygmund type estimates for elliptic measure data problems of the type \eqref{model} in the singular case $2-1/n < p < 2$, which provide the complete singular counterpart to the results in \cite{SY} within the framework of SOLA. 

\subsection{Assumptions and main result}
Let us introduce the main assumptions of this paper. We consider the following Dirichlet problem:
\begin{equation}\label{main.eq}
\left\{
\begin{aligned}
-\divo\, A(x,Du) &= \mu & \text{in }& \Omega, \\
u &= 0 & \text{on }& \partial\Omega.
\end{aligned}
\right.
\end{equation}
The vector field $A: \Omega \times \ern \to \ern$ is assumed to be continuously differentiable with respect to the second variable $z \in \ern \setminus \{0\}$, with $\partial A(\cdot) \equiv \partial_z A(\cdot)$ being Carath\'eodory regular. It is further assumed to satisfy the following growth, ellipticity, and continuity conditions:
\begin{equation}\label{growth}
\begin{cases}
|A(x, z)| + |\partial A(x, z)||z| \le L(|z|^{p-1} + a(x)|z|^{q-1}), \\
\nu(|z|^{p-2} + a(x)|z|^{q-2})|\zeta|^2 \le \partial A(x, z)\zeta\cdot\zeta, \\
|A(x_1, z) - A(x_2, z)| \le L|a(x_1) - a(x_2)||z|^{q-1}
\end{cases}
\end{equation}
for any $x, x_1, x_2 \in \Omega$, $z \in \ern \setminus \{0\}$, and $\zeta \in \ern$, where $0<\nu \leq L<\infty$ are fixed constants and $a:\Omega \to [0,\infty)$ is a modulating coefficient satisfying
\begin{equation}\label{a.holder}
0 \le a(x) \le \|a\|_{L^\infty} \;\; \text{for a.e.}\;\; x \in \Omega, \qquad \text{and} \qquad a \in C^{0,\alpha}(\Omega)\;\; \text{for some}\;\; \alpha \in (0, 1].
\end{equation}
Throughout the paper, unless otherwise specified, we consider the singular case
\begin{equation}\label{p.bound}
2-1/n < p < 2 \qquad \text{and} \qquad p < q < \infty.
\end{equation}
To control the rate of nonuniform ellipticity of \eqref{main.eq} in the setting of measure data problems, we assume that
\begin{equation}\label{rate.sola}
\frac{q-1}{p-1} < 1 + \frac{\alpha}{n-1}.
\end{equation}
As discussed in \cite[Remark 1.1]{SY}, condition \eqref{rate.sola} can be seen as the natural measure data analog of the well-known bound
\begin{equation}\label{rate.weak.sol}
\frac{q}{p} \le 1 + \frac{\alpha}{n},
\end{equation}
which is a central assumption used in \cite{BCM18, CM15b, CM16JFA, DM19} for the validity of regularity of minimizers or weak solutions. 
Heuristically, when $a(x)$ is close to $0$, assumption \eqref{rate.weak.sol} allows one to appropriately control the double phase functional \eqref{dp.ftnal} in terms of the $W^{1,p}$-energy. 
In the same spirit, our assumption \eqref{rate.sola} precisely ensures that several regularity estimates for \eqref{main.eq} can be reduced to those for measure data problems with $p$-growth when $a(x)$ is close to $0$.

In view of \eqref{model} and \eqref{dp.ftnal}, we denote
\begin{equation}\label{def.H}
H(x,t) \coloneqq t^{p} + a(x) t^{q} \qquad \text{and} \qquad h(x, t) \coloneqq t^{p-1} + a(x)t^{q-1}
\end{equation}
for $x \in \Omega$ and $t \ge 0$. 
Basic properties of the functions $H$ and $h$, along with related function spaces, will be discussed in the next section.  

As mentioned above, we consider the notion of SOLA introduced in \cite{BG89}. 
Here we state the definition of SOLA to \eqref{main.eq}.

\begin{definition}\label{def:sola}
A function $u \in W^{1,1}_0(\Omega)$ is a SOLA to the problem \eqref{main.eq}, under assumptions \eqref{growth} with
\begin{equation}\label{growth.full}
2 - 1/n < p \leq n \qquad \text{and} \qquad p < q < \infty,
\end{equation}
if it is a distributional solution to $\eqref{main.eq}_1$, i.e., $A(x, Du) \in L^{1}(\Omega)$ and 
\[
\int_{\Omega} A(x, Du) \cdot D \varphi \dx = \int_{\Omega} \varphi \, d \mu
\]
holds for any $\varphi \in C^{\infty}_{0}(\Omega)$.
Moreover, it has to satisfy the following approximation property: there exists a sequence of weak solutions $\{u_j\} \subset W^{1,H}_{0}(\Omega)$ to
\begin{equation}\label{approx.prob}
\left\{
\begin{aligned}
-\text{div}\,A(x,Du_{j}) & = \mu_{j}& \text{in }& \Omega, \\
u_{j} & = 0& \text{on }& \partial\Omega
\end{aligned}
\right.
\end{equation}
such that $u_j \to u$ in $W^{1,1}_{0}(\Omega)$. Here, the sequence $\{\mu_j\} \subset L^{\infty}(\Omega)$ converges to $\mu$ weakly$^*$ in the sense of measures and satisfies
\begin{equation}\label{muj.conv}
\limsup_{j\to\infty}|\mu_j|(B) \le |\mu|(\overline{B}) \quad \text{for any ball}\;\; B\subset \mathbb{R}^n.
\end{equation}
\end{definition}

It is proved in \cite[Proposition 2.3]{SY} that, under assumptions \eqref{growth}, \eqref{a.holder}, \eqref{rate.sola} and \eqref{growth.full}, there exists a SOLA $u$ to \eqref{main.eq} such that $u \in W^{1,s}_{0}(\Omega)$ for any $s$ satisfying 
\begin{equation}\label{exp.range}
1 \le s < \frac{n(p-1)}{n-1}.
\end{equation}
This is shown by constructing a sequence of approximating solutions $\{u_j\}$ to \eqref{approx.prob} which satisfies $u_j \to u$ strongly in $W^{1,s}_0(\Omega)$ for any $s$ satisfying \eqref{exp.range}. 
We remark that its proof particularly employs the observation that
\begin{equation}\label{q-1.admissible}
\eqref{rate.sola},\;\; \alpha \le 1 \quad \Longrightarrow \quad \frac{q-1}{p-1} < 1+\frac{1}{n-1} \quad \Longleftrightarrow \quad q-1 < \frac{n(p-1)}{n-1}.
\end{equation}

We now state our main theorem.
To avoid the burden of writing the dependencies of various universal constants, we denote the set of main structural parameters by
\[ \data \coloneqq (n,p,q,\nu,L,\|a\|_{L^{\infty}},\alpha,[a]_{\alpha}). \]
Also, the exponents $\kappa \equiv \kappa(n,p,q,\alpha)$ and $\gamma_0 \equiv \gamma_0(n,p,q,\alpha)$ appearing in the following theorem are given in \eqref{def.kappa} and \eqref{def.gamma0}, respectively.

\begin{theorem}\label{main.thm}
Let $u$ be a SOLA to \eqref{main.eq} under assumptions \eqref{growth}--\eqref{rate.sola}. Then we have the implication
\begin{equation}\label{cz.implication}
\mathbf{M}_1(\mu) \in L^\gamma_{\loc}(\Omega) \implies h(\cdot, |Du|) \in L^\gamma_{\loc}(\Omega) \quad \text{for any}\;\; \gamma \in (1, \infty).
\end{equation}
Moreover, for any $\gamma \in (1, \infty)$, there exist a radius $R_{0} \equiv R_{0}(\data,|\mu|(\Omega), \|Du\|_{L^{\kappa}(\Omega)}, \|Du\|_{L^{\gamma_0}(\Omega)},\gamma) \in (0,1)$ and constants $c\equiv c(\data, \|Du\|_{L^{\kappa}(\Omega)}) \ge 1$ and $c_{\gamma} \equiv c_{\gamma}(\data, \|Du\|_{L^{\kappa}(\Omega)} ,\gamma) \ge 1$ such that
\begin{equation}\label{main.est}
\left( \means{B_R} [h(x, |Du|)]^\gamma \dx \right)^{1/\gamma} \le c \means{B_{2R}} h(x, |Du|) \dx + c_\gamma \left( \means{B_{2R}} [\mathbf{M}_1(\mu)]^\gamma \dx \right)^{1/\gamma}
\end{equation}
holds whenever $B_{2R} \Subset \Omega$ is a ball with $2R \in (0, R_0)$. 
\end{theorem}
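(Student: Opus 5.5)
We will prove Theorem~\ref{main.thm} by the level-set (good-$\lambda$/Vitali covering) technique in the spirit of Mingione and Phuc, applied to the approximating weak solutions $u_j$ of \eqref{approx.prob}. Estimate \eqref{main.est} will first be shown for $u_j$ with constants independent of $j$, and then passed to the limit using $u_j \to u$ in $W^{1,s}_0$ for $s$ in the range \eqref{exp.range}. For this passage, \eqref{q-1.admissible} guarantees that $h(x,|Du_j|) \to h(x,|Du|)$ in $L^1$. We also use \eqref{muj.conv} to control $\mathbf{M}_1(\mu_j)$ by $\mathbf{M}_1(\mu)$ on slightly larger balls, together with Fatou's lemma. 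Throughout, we fix a ball $B_{2R}\Subset\Omega$ with $R$ small, so that $[a]_\alpha R^\alpha$ is small.

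\textbf{Comparison estimates.} On each ball $B_{2r}(x_0)\subset B_{2R}$ we let $w$ solve the homogeneous problem $-\divo A(x,Dw)=0$ with $w=u_j$ on $\partial B_{2r}$. Next we freeze the coefficient: we let $v$ solve $-\divo A(x_m,Dv)=0$ in $B_r$ with $v=w$ on the boundary, where $x_m$ is a minimum point of $a$. In the singular range we cannot test directly with $u_j-w$. Instead we use truncations $T_k(u_j-w)$ together with the standard $p<2$ trick of weighting by $(|Du_j|+|Dw|)^{(2-p)}$, combined with Hölder's inequality. The target is an $L^1$-comparison of the form
\[
\means{B_{2r}} |Du_j-Dw|\dx \le c\Big[\tfrac{|\mu_j|(B_{2r})}{r^{n-1}}\Big]^{\frac1{p-1}} + c\,\tfrac{|\mu_j|(B_{2r})}{r^{n-1}}\Big(\means{B_{2r}}|Du_j|\dx\Big)^{2-p},
\]
with an analogous estimate in terms of $h$ that handles the $a(x)|z|^{q-1}$ part. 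The second comparison, $w$ versus $v$, costs a factor $[a]_\alpha r^\alpha\|Dw\|^{q-p}$, and this is where \eqref{rate.sola} enters. The reverse Hölder inequality for $w$ with constants depending only on $\|Dw\|_{L^\kappa}$, $\kappa<p$ (the analog of \cite{SY}), lets us absorb this factor once $r^{\alpha}$ beats the growth given by $\|Du\|_{L^\kappa}$ and $\|Du\|_{L^{\gamma_0}}$. This absorption is what produces the dependence of $R_0$.

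\textbf{Level sets and covering.} For $v$ we use the Lipschitz bound $\|h(x_m,|Dv|)\|_{L^\infty(B_{r/2})}\le c\means{B_r}h(x_m,|Dv|)\dx$. With $\lambda_0 \sim \means{B_{2R}} h(x,|Du_j|)\dx + \|\mathbf{M}_1(\mu)\|$-type terms, a Calderón--Zygmund/Vitali covering of the set $\{\mathcal{M}(h(\cdot,|Du_j|))>N\lambda\}$ then yields
\[
|\{\mathcal{M}(h)>N\lambda\}\cap B_R| \le \varepsilon|\{\mathcal{M}(h)>\lambda\}\cap B_R| + |\{\mathbf{M}_1(\mu_j)>\delta\lambda\}\cap B_{2R}|
\]
for suitable $\varepsilon,\delta$. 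Integrating in $\lambda$ against $\gamma\lambda^{\gamma-1}$ and choosing $\varepsilon N^\gamma<1/2$ gives \eqref{main.est}; this is why $c_\gamma$ depends on $\gamma$. The first term on the right of \eqref{main.est} arises from $\lambda_0$.

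\textbf{Main obstacle.} We expect the main difficulty to be the singular comparison estimate. The term $(\means|Du_j|)^{2-p}$ must be converted into quantities at the scale $\lambda$ on the balls selected in the covering. In addition, this has to be done while keeping the constants dependent only on $\|Du\|_{L^\kappa}$, and not on any $L^p$-energy, which is unavailable for measure data. We would first try Young's inequality to split the mixed term into $\varepsilon\lambda$ plus $c_\varepsilon[\mathbf{M}_1]^{1/(p-1)}$. This rewriting is legitimate because $h$ is comparable to $t^{p-1}$ in the regime where $a$ is small, which \eqref{rate.sola} lets us reduce to.
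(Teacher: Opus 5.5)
Your skeleton (approximation, comparison with $w$, freezing to $v$, Lipschitz bound for $v$, covering and integration over level sets) is the paper's. A good-$\lambda$ argument with the Hardy--Littlewood maximal function instead of the exit-time covering would be a legitimate variant. The gap is at the step you flag as the main obstacle, and Young's inequality alone does not close it.

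After Young, your $L^1$ comparison leaves $\varepsilon\mean{B_{2r}}|Du_j|\,dx$. The covering, however, only gives $\mean{8B_i}h(x,|Du_j|)\,dx\lesssim\lambda$, i.e.\ control of $L^{p-1}$-averages. Since $p-1<1$, Jensen runs the wrong way, and Lemma~\ref{lem.comp.pre} is not available for $s=p-1<1$. The missing ingredient is a reverse H\"older inequality for the measure-data solution itself, Lemma~\ref{lem.rev.u}:
$(\mean{B_{\eta r}}|Du|^{s}\,dx)^{1/s}\le c(\mean{B_r}|Du|^{\theta}\,dx)^{1/\theta}+c[|\mu|(B_r)/r^{n-1}]^{1/(p-1)}$ for every $\theta\in(0,s]$, with $c$ depending only on $\|Du\|_{L^\kappa(\Omega)}$.

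The paper proves it by iterating (Lemma~\ref{tech.lemma}) the preliminary comparison plus Young, \eqref{comp.young}, together with Corollary~\ref{revhol.cor} for $w_\tau$. The constant there stays bounded because $\|Dw_\tau\|_{L^\kappa}$ is controlled via \eqref{comp.young} and the first condition in \eqref{R0.ini}. This yields Lemma~\ref{improved.comp} with $s=\theta=p-1$, and only then does Young put the mixed term on the $\lambda$-scale.

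The $a(x)|z|^{q-1}$ part is not ``analogous'' either. In the $(p,q)$-phase the paper uses the $V_q$-part of \eqref{weighted.comparison} with $\mu/\|a\|_{L^\infty(B_{4r})}$, and, if $q<2$, Lemma~\ref{lem.rev.u} with $\theta=q-1$. In the $p$-phase it multiplies the $L^{q-1}$-comparison (admissible by \eqref{q-1.admissible}) by $a\lesssim r^\alpha$. The resulting mixed term needs the exponent $\gamma_0$, the third smallness condition in \eqref{R0.ini} with $\sigma_0>0$, and Lemma~\ref{lem.rev.u} with $\theta=p-1$. That is where $\|Du\|_{L^{\gamma_0}}$ enters $R_0$. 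The $w$--$v$ comparison, to which you attribute it, only contributes $\varrho^{\alpha-n(q-p)/\kappa}$ (with constants depending on $\|Du\|_{L^\kappa}$) and, in the $(p,q)$-phase, a factor $K^{-p(p-1)/(2q)}$.

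Two further steps fail as written.

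First, freezing at a minimum point $x_m$ of $a$ gives a Lipschitz bound for $h(x_m,|Dv|)$. But $h(x,|Dv|)\ge h(x_m,|Dv|)$, so $h(\cdot,|Dv|)\lesssim\lambda$ on the ball does not follow. The paper freezes at a maximum point $\tilde x_i$ of $a$ on $\overline{2B_i}$. Then $\sup_{B_i}h(\cdot,|Dv|)\le\sup_{B_i}h(\tilde x_i,|Dv|)$ is immediate, and $\osc_{2B_i}a\le a(\tilde x_i)$ is available for the $q$-absorption in Step 6.

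Second, proving \eqref{main.est} for $u_j$ and letting $j\to\infty$ requires $\limsup_j\int[\mathbf M_1(\mu_j)]^\gamma\,dx\lesssim\int[\mathbf M_1(\mu)]^\gamma\,dx$ on slightly larger balls. \eqref{muj.conv} does not provide this: it controls $|\mu_j|(B)$ ball by ball, not uniformly in the radius. For instance, take $\mu=0$, $\mu_j$ of mass $1/j$ spread uniformly on a ball of radius $e^{-j}$, and $\gamma>n/(n-1)$; then $\int[\mathbf M_1(\mu_j)]^\gamma\,dx\to\infty$. Since the approximating sequence of a SOLA is given rather than chosen, your limit passage works essentially only for mollified data. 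The paper avoids this by running the covering for $u$ with $\mathbf M_1(\mu)$ itself. It invokes $u_j$ only on each fixed ball $8B_i$, with $j=j(i)$ chosen so that \eqref{approx.Du.2}--\eqref{approx.mu} hold.
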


\begin{remark}
A careful inspection of the proofs shows that when $q \ge 2$, the radius $R_0$ can be taken independent of $\|Du\|_{L^{\gamma_0}(\Omega)}$.
See the proof of Lemma \ref{lem.1st.h} below.
\end{remark}

\subsection{Technical novelties}\label{sec:tech}
The approach of the present paper is based on that of \cite{SY}.
However, extending the methods of \cite{SY} to the singular case is not a trivial task, since this case causes several difficulties which did not appear in the degenerate case.
These difficulties stem from the fact that the operator exhibits different monotonicity properties compared to the degenerate case. Such issues are particularly intricate in the setting of measure data problems.

One of the delicate issues is concerned with comparison estimates between \eqref{main.eq} and a Dirichlet problem involving
\begin{equation}\label{intro.homo}
-\mathrm{div}\,A(x,Dw) = 0 \quad \text{in}\;\; B \Subset \Omega,
\end{equation}
which play a crucial role in establishing various regularity results for \eqref{main.eq}. In this paper, we obtain new comparison estimates below the natural energy space that extend those available for the $p$-Laplacian \eqref{measure.p} in a precise way.
In contrast to those concerned with the degenerate case \cite[Step 2 in the proof of Theorem 1.2]{SY}, the comparison estimates obtained in this paper involve additional terms depending on $Du$.
This is a common phenomenon which already appears in the case of \eqref{measure.p}, see for instance \cite{DM10JFA}. 
Indeed, it is one of the delicate points of the proof to deal with such terms in a way suitable in the double phase setting. 
This forces us to perform a careful analysis with the help of \eqref{rate.sola}, treating the $p$-phase (the case when $a(x)$ is close to $0$) and the $(p,q)$-phase (the case when $a(x)$ is far from $0$) separately.

Moreover, in order to obtain estimates for $h(x,|Du-Dw|)$, we additionally show a reverse H\"older type estimate for \eqref{main.eq} (Lemma \ref{lem.rev.u}) by modifying the idea used in \cite[Proposition~4.1]{KM12JFA}.
The proof goes by combining a preliminary comparison estimate (Lemma \ref{lem.comp.pre}) and a reverse H\"older estimate for \eqref{intro.homo} (Corollary \ref{revhol.cor}).
For the latter, we also extend the regularity estimates for \eqref{intro.homo}, obtained in \cite[Section 3]{SY}, to the singular case.
In particular, we present a more streamlined proof which covers both degenerate and singular cases simultaneously.
Then, using all the aforementioned ingredients, we prove our main result in Theorem \ref{main.thm} via the maximal function free technique introduced in \cite{AM} and revisited in \cite{BCY21a,BCY21b,SY}.

We expect that our approach and results can be extended in several directions, as mentioned in \cite[Remark~1.3]{SY}.
We also remark that the lower bound $p>2-1/n$ is essential in dealing with SOLA (recall \eqref{exp.range}), and one needs different notions of solutions in order to consider lower values of $p$, see for instance \cite{BBGGPV95,Ch18,CiMa17NA,DMOP99} and references therein.
Gradient estimates for $p$-Laplacian type equations with measure data in the case $1 < p \le 2-1/n$ can be found in \cite{NP19,NP23ARMA}.

The remaining part of this paper is organized as follows.
In the next section, we introduce basic notations and function spaces.
In Section \ref{sec3}, we obtain reverse H\"older type estimates and gradient integrability results for homogeneous problems linked to \eqref{main.eq}.
In Section \ref{sec.comp}, we establish comparison estimates for \eqref{main.eq} and related homogeneous problems.
Finally, in Section \ref{sec4}, we prove Theorem \ref{main.thm}.

\section{Preliminaries} 
\subsection{Notation}
We denote by $c$ a general constant greater than or equal to one, 
whose value may vary from line to line. Specific dependencies of constants are denoted by using parentheses.

For any $p >1$, we denote its H\"older conjugate by $p' \coloneqq p/(p-1)$. Moreover,
\begin{equation*}
p^{*} \coloneqq 
\left\{
\begin{aligned}
&\frac{np}{n-p} & \text{if}\;\; & p < n, \\
&\text{any number in }(p,\infty) & \text{if}\;\; & p \ge n
\end{aligned}
\right.
\end{equation*}
and 
\[ p_{*} \coloneqq \max\left\{\frac{np}{n+p},1\right\} \]
denote the Sobolev conjugate and the inverse Sobolev conjugate of $p$, respectively.

As usual, we denote by $B_{r}(x_0) \coloneqq \{x\in\mathbb{R}^{n}:|x-x_0|<r\}$ the $n$-dimensional open ball with center $x_0 \in \mathbb{R}^n$ and radius $r>0$. If there is no confusion, we omit the center and simply denote $B_{r}\equiv B_{r}(x_0)$. 
Also, given a ball $B$, we denote by $\gamma B$ the concentric ball with radius magnified by a factor $\gamma>0$. 
Unless otherwise stated, different balls in the same context are concentric. 

The $n$-dimensional Lebesgue measure of a measurable set $U \subset \mathbb{R}^n$ is denoted by $|U|$. For an integrable map $f:U \to \mathbb{R}^{k}$, with $k \in \mathbb{N}$ and $0<|U|<\infty$, we denote the integral average of $f$ over $U$ by
\begin{equation*}
(f)_{U} \coloneqq \mean{U}f\,dx  \coloneqq \frac{1}{|U|} \int_{U}f\,dx .
\end{equation*}

\subsection{Function spaces}
We recall the function $H(\cdot)$ defined in \eqref{def.H}. It is a generalized Young function (see \cite{HH19book}) that satisfies the $\Delta_2$ and $\nabla_2$ conditions: $H(x,2t) \le 2^q H(x,t)$ and $H(x,t) \le 2^{-p/(p-1)}H(x,2^{1/(p-1)}t)$ for any $x \in \Omega$ and $t \ge 0$. Accordingly, for any open set $U \subseteq \Omega$,
we consider the Musielak--Orlicz space $L^{H}(U)$, which is defined as the set of all measurable function $f:U \to \mathbb{R}$ such that
\[ \int_{U}H(x,|f|)\,dx < \infty. \]
Due to the $\Delta_2$ and $\nabla_2$ conditions, we have that $L^{H}(U)$ is a separable, reflexive Banach space endowed with the Luxemburg norm
\[ \|f\|_{L^{H}(U)} \coloneqq \inf\left\{ \lambda>0: \int_{U}H(|f|/\lambda)\,dx < \infty \right\}. \] 
Moreover, the Musielak--Orlicz--Sobolev space $W^{1,H}(U)$ is defined as the set of all functions $f \in L^{H}(U) \cap W^{1,1}(U)$ satisfying
\[ \int_{U}H(x,|Df|)\,dx < \infty, \]
which is also a separable reflexive Banach space endowed with the norm
\[ \|f\|_{W^{1,H}(U)} \coloneqq \|f\|_{L^H(U)} + \|Df\|_{L^H(U)}. \]
We further define $W^{1,H}_{0}(U)$ as the closure of $C^{\infty}_{0}(U)$ in $W^{1,H}(U)$. 
We refer to \cite{HH19book} for more on generalized Young functions and Musielak--Orlicz spaces. 

\subsection{An auxiliary vector field}
Observe that the ellipticity assumption $\eqref{growth}_{2}$ on $A(\cdot)$ implies the following monotonicity property:
\begin{equation*}
(|z_{1}|+|z_{2}|)^{p-2}|z_{1}-z_{2}|^{2} + a(x)(|z_{1}|+|z_{2}|)^{q-2}|z_{1}-z_{2}|^{2} \le c(A(x,z_{1})-A(x,z_{2}))\cdot(z_{1}-z_{2})
\end{equation*}
for any $x \in \Omega$ and $z_{1},z_{2} \in \mathbb{R}^{n}$, where $c\equiv c(n,p,q,L,\nu)$.

We accordingly introduce, for $s \in \{p,q\}$, the auxiliary vector field $V_{s} : \mathbb{R}^n \to \mathbb{R}^n$ defined by
\[ V_{s}(z) \coloneqq |z|^{(s-2)/2}z, \qquad z \in \mathbb{R}^n. \]
A basic property of $V_s$ is that
\begin{equation}\label{VV}
|V_{s}(z_1) - V_{s}(z_2)|^{2} \approx (|z_1| + |z_2|)^{s-2}|z_1 - z_2|^{2}
\end{equation}
holds for any $z_1 , z_2 \in \mathbb{R}^n$, with the implicit constants depending only on $n$ and $s$. 
In particular, we have
\begin{equation}\label{Vprop}
\begin{aligned}
& s\ge 2 \;\; \Longrightarrow \;\; |z_1 - z_2 |^{s} \le c|V_{s}(z_1)-V_{s}(z_2)|^{2}, \\
& s<2 \;\; \Longrightarrow \;\; |z_1 - z_2|^{s} \le c\varepsilon^{(s-2)/s}|V_{s}(z_1) - V_{s}(z_2)|^{2} + \varepsilon|z_1|^{p} \quad \forall\; \varepsilon \in (0,1),
\end{aligned}
\end{equation}
where $c\equiv c(n,s)$. 
In light of \eqref{VV}, the above monotonicity property of $A(\cdot)$ is equivalent to
\begin{equation}\label{mono}
|V_{p}(z_1) - V_{p}(z_2)|^2 + a(x)|V_{q}(z_1) - V_{q}(z_2)|^2 \le c(A(x,z_1)-A(x,z_2))\cdot (z_1 - z_2 ).
\end{equation}

We end this section with a well-known iteration lemma, see for instance \cite[Lemma 6.1]{Giu}.

\begin{lemma}\label{tech.lemma}
Let $Z:[R,2R] \rightarrow [0,\infty)$ be a bounded function that satisfies
\begin{equation*}
Z(r_{1}) \le \delta_{0}Z(r_{2}) + \frac{C_{1}}{(r_{2}-r_{1})^{\ell}} + C_{2}
\end{equation*}
for any $r_{1},r_{2}$ with $R \le r_{1} < r_{2} \le 2R$, where $\delta_{0} \in (0,1)$ and $C_{1},C_{2},\ell > 0$ are given constants. Then there exists a constant $c \equiv c(\delta_{0},\ell)$ such that
\begin{equation*}
Z(R) \le c\left[\frac{C_{1}}{R^{\ell}} + C_{2}\right].
\end{equation*}
\end{lemma}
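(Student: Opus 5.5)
The plan is the classical iteration argument: apply the hypothesis repeatedly along a sequence of radii whose gaps shrink geometrically, and pick the ratio of shrinking so that the factor $\delta_0$ beats the blow-up of $(r_2-r_1)^{-\ell}$. First I fix $\tau \in (0,1)$ depending only on $\delta_0$ and $\ell$ so that $\delta_0\tau^{-\ell}<1$, for instance $\tau^{\ell} = (1+\delta_0)/2$. Then I define
\[
r_0 \coloneqq R, \qquad r_{i+1} \coloneqq r_i + (1-\tau)\tau^{i}R \quad (i\ge 0).
\]
The sequence $\{r_i\}$ is increasing. Since $\sum_{i\ge0}(1-\tau)\tau^i = 1$, all $r_i$ lie in $[R,2R)$ and $r_i \to 2R$. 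Hence the hypothesis applies to each pair $r_1=r_i$, $r_2=r_{i+1}$.

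Next I iterate. Applying the assumed inequality with $r_1=r_i$ and $r_2=r_{i+1}$ gives
\[
Z(r_i) \le \delta_0 Z(r_{i+1}) + \frac{C_1}{(1-\tau)^{\ell}\tau^{i\ell}R^{\ell}} + C_2 .
\]
Chaining this from $i=0$ up to $i=k-1$ gives, by induction on $k$,
\[
Z(R) \le \delta_0^{k} Z(r_k) + \sum_{i=0}^{k-1}\delta_0^{i}\left[\frac{C_1}{(1-\tau)^{\ell}R^{\ell}}\,\tau^{-i\ell} + C_2\right].
\]

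Finally I let $k\to\infty$. Because $Z$ is bounded on $[R,2R]$ and $\delta_0<1$, the term $\delta_0^k Z(r_k)$ tends to $0$. This is exactly where the boundedness assumption is needed, and it is the only delicate point. The remaining series are geometric with ratios $\delta_0\tau^{-\ell}<1$ and $\delta_0<1$, so they converge. This yields
\[
Z(R) \le \frac{(1-\tau)^{-\ell}}{1-\delta_0\tau^{-\ell}}\,\frac{C_1}{R^{\ell}} + \frac{1}{1-\delta_0}\,C_2 .
\]
The claim follows with $c = \max\bigl\{(1-\tau)^{-\ell}(1-\delta_0\tau^{-\ell})^{-1},\,(1-\delta_0)^{-1}\bigr\}$, which depends only on $\delta_0$ and $\ell$ because $\tau$ does.

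The main obstacle is only the choice of $\tau$ that makes the $C_1$-series summable. Everything else is routine bookkeeping.
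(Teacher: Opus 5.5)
Your proof is correct: the choice $\tau^{\ell}=(1+\delta_0)/2$ gives $\delta_0\tau^{-\ell}<1$, the radii $r_k=2R-\tau^kR$ stay in $[R,2R)$, and boundedness of $Z$ kills the term $\delta_0^kZ(r_k)$. The paper gives no proof of its own and simply cites Giusti's Lemma 6.1, whose proof is exactly this geometric iteration, so your argument matches the referenced one.
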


\section{Regularity for reference problems}\label{sec3}
In this section, we investigate several regularity results for homogeneous equations linked to \eqref{main.eq}. Throughout this section, we consider the range \eqref{growth.full}, which covers both the singular and degenerate cases.

\subsection{Gradient integrability for homogeneous equations}
Here we extend the gradient regularity results for homogeneous double phase problems, established in \cite[Section 3]{SY}, to the singular case \eqref{p.bound}. In particular, as mentioned in the introduction, we adopt a slightly different presentation and treat the general case \eqref{growth.full}. 
We fix a ball $B_{r} \Subset \Omega$ with $r \le 1$,  and consider the homogeneous equation
\begin{equation}\label{homoeq}
-\mathrm{div}\,A(x,Dw)  = 0 \quad \text{in } B_{r}.
\end{equation}

In the next reasoning we will distinguish two cases: for a parameter $\bar{L} \ge 8$, either the $p$-phase
\begin{equation}\label{p.phase.4r}
\sup_{x \in B_{r}}a(x) \le \bar{L}[a]_{\alpha}r^{\alpha}
\end{equation}
or the $(p,q)$-phase
\begin{equation}\label{pq.phase.4r} 
\sup_{x\in B_r}a(x) \ge \bar{L}[a]_{\alpha}r^{\alpha} .
\end{equation}
We shall treat them separately, starting with the $p$-phase \eqref{p.phase.4r}.

\subsubsection{Estimates in the $p$-phase}

We first assume that \eqref{p.phase.4r} holds.
Set
\begin{equation*}
p_0 = \min \left \{ \frac{n\max\{p-1,1\}}{n-1}+ p-q , p \right \}.
\end{equation*}
Observe that $p_0 \leq p$ and that
\[
p_0 \overset{\eqref{rate.sola}}{>} \frac{n}{n-1}\max\{p-1,1\} - \frac{\alpha(p-1)}{n-1} \ge \frac{n-\alpha}{n-1}\max\{p-1,1\} \ge 1.
\]
We then define a function $m:[p_0, p] \to \mathbb{R}$ by
\[ m(t) = (t + q-p)_{*}, \qquad t \in [p_0,p]. \]
Note that the function $[1,n) \ni t \mapsto t^* - t = t^{2}/(n-t)$ is increasing.
Using this, we claim that
\begin{align}\label{m.cont}
m(t) < t \qquad \text{for any}\;\; t \in [p_0, p].
\end{align}
\begin{itemize}
\item[(i)] If $2 \le p \le n$, then (by choosing $t^*$ sufficiently large when $t=n$)
\begin{equation*}
\begin{aligned}
t^* - t & \ge \left(\frac{n(p-1)}{n-1} + p-q \right)^* - \frac{n(p-1)}{n-1} + q-p \\
\overset{\eqref{rate.sola}}& {>} \left(\frac{(n-\alpha)(p-1)}{n-1}\right)^* - \frac{n(p-1)}{n-1} + q-p \\
& \ge (p-1)^* - \frac{n(p-1)}{n-1} + q-p \\
& = \frac{n(p-1)}{n-p+1} - \frac{n(p-1)}{n-1} + q-p \\
& \ge q-p.
\end{aligned}
\end{equation*}
\item[(ii)] If $2-1/n < p < 2$, then
\begin{equation*}
\begin{aligned}
t^{*} - t \ge \left(\frac{n}{n-1} + p-q\right)^{*} - \frac{n}{n-1} + q-p > q-p,
\end{aligned}
\end{equation*}
where in the last inequality we have also used the fact that
\[ \frac{n}{n-1} + p-q \overset{\eqref{rate.sola}}{>} \frac{n}{n-1} - \frac{\alpha(p-1)}{n-1} \ge \frac{n+1-p}{n-1} > 1 \;\; \Longrightarrow \;\; \left(\frac{n}{n-1}+p-q\right)^* > \frac{n}{n-1}. \]
\end{itemize}
Therefore, in any case, we have \eqref{m.cont} as follows:
\[ t^* > t+q-p \;\; \Longleftrightarrow \;\; t > (t+q-p)_* = m(t) \quad \text{for any}\;\; t \in [p_0,p]. \]

We now set the exponent 
\begin{equation}\label{def.kappa}
\kappa \coloneqq \frac{1}{2}\left[ \max\left\{ \frac{n(q-p)}{\alpha}, m(p_0) \right\} + \frac{n(p-1)}{n-1} \right].
\end{equation}
From \eqref{p.bound} and the definition of $p_0$, it directly follows that
\[ m(p_0) \ge \left(\frac{n}{n-1}\right)_* = 1 \;\; \Longrightarrow \;\; \kappa \ge \frac{1}{2}\left( 1 + \frac{n(p-1)}{n-1} \right) >1. \]
Moreover, since
\begin{equation*}
\eqref{rate.sola} \;\; \Longleftrightarrow \;\; \frac{n(q-p)}{\alpha} < \frac{n(p-1)}{n-1}
\end{equation*}
and
\begin{equation}\label{m.bd}
m(p_0) = \left(\frac{n}{n-1}\max\{p-1,1\}\right)_* < \frac{n(p-1)}{n-1},
\end{equation}
we have
\begin{equation}\label{kappa.range} 
\max\left\{ \frac{n(q-p)}{\alpha},m(p_0) \right\} < \kappa < \frac{n(p-1)}{n-1}. 
\end{equation} 
Recall from \eqref{m.cont} that the function $[p_0,p] \ni t \mapsto m(t)/t$ is continuous and bounded above by $1$. Using this fact, together with \eqref{m.cont} and \eqref{m.bd}, we can choose a number $N_{0} \equiv N_{0}(n,p,q,\alpha) \in \mathbb{N} \cup \{ 0 \}$ such that
\[ \underbrace{m\circ m \circ \cdots \circ m}_{N_{0}+1}(p) \leq \kappa < \underbrace{m\circ m \circ \cdots \circ m}_{N_{0}}(p). \]
In this setting, by restricting the domain of $m$ and employing the refined choice of $\kappa$ in \eqref{def.kappa}, we can extend the results in \cite[Lemmas 3.1--3.4]{SY} to the general case \eqref{growth.full}. Consequently, proceeding exactly as in \cite[Section 3]{SY}, we arrive at the following gradient integrability estimate.

\begin{lemma}\label{lem:higher}
Let $w$ be a weak solution to \eqref{homoeq} under assumptions \eqref{growth}, \eqref{a.holder}, \eqref{rate.sola} and \eqref{growth.full}. Assume that $B_{r}$ satisfies \eqref{p.phase.4r} for some $\bar{L} \in [8,\infty)$. Then for any $\tilde{q} < np/(n-2\alpha)$ ($= \infty$ when $n=2$ and $\alpha=1$), $\theta \in (0,\kappa]$ and $\eta \in (0,1)$, we have
\begin{equation}\label{higher.est}
\left(\mean{B_{\eta r}}|Dw|^{\tilde{q}}\,dx\right)^{1/\tilde{q}} \le c\left(\mean{B_{r}}|Dw|^{\theta}\,dx\right)^{1/\theta}
\end{equation}
for a constant $c \equiv c(\data, \|Dw\|_{L^{\kappa}(B_{r})}, \bar{L}, \tilde{q},\theta,\eta)$. 
In particular, it follows that 
\[ Dw \in L^{2q-p}_{\loc}(B_{r}) \subset L^{q}_{\loc}(B_{r}). \] 
\end{lemma}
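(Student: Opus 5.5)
We argue in three steps: a reverse H\"older/Gehring estimate with exponents below $p$, a down-iteration through the map $m$ until the exponent reaches $\kappa$, and a comparison with a frozen $p$-growth problem to reach $\tilde q$. Throughout, $B_r$ is in the $p$-phase \eqref{p.phase.4r}, so
\[
\sup_{B_r}a\le \bar L[a]_\alpha r^\alpha .
\]
Thus on every subball $B_\rho\subset B_r$ the $q$-term is a perturbation of size $r^\alpha|Dw|^{q-p}$ relative to the $p$-term.

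\textbf{Step 1: Caccioppoli, Sobolev--Poincar\'e and Gehring.} We test \eqref{homoeq} with $(w-(w)_{B_\rho})\phi^q$, which gives the standard Caccioppoli inequality for $H$. Applying Sobolev--Poincar\'e to the terms $|w-(w)|^p\rho^{-p}$ and $a(x)|w-(w)|^q\rho^{-q}$, and estimating $a(x)\le c\rho^{\alpha}$-type quantities via \eqref{p.phase.4r}, we obtain
\[
\mean{B_{\rho/2}}H(x,|Dw|)\,dx\le c\left(\mean{B_\rho}|Dw|^{t}\,dx\right)^{p/t}+c\,r^{\alpha}\rho^{-?}\left(\mean{B_\rho}|Dw|^{m(t)}\,dx\right)^{(t+q-p)/m(t)}.
\]
Here we use the Sobolev embedding $W^{1,m(t)}\hookrightarrow L^{t+q-p}$, which is exactly how $m(t)=(t+q-p)_*$ enters. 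The power of $\rho$ is absorbed using $\rho\le r$, the bound $\alpha$ versus $n(q-p)/\kappa$ (this is where $\kappa>n(q-p)/\alpha$ from \eqref{kappa.range} is used), and the fact that $\|Dw\|_{L^\kappa}$ controls $\rho^{n}$-normalized averages. Combined with Gehring's lemma, this yields a reverse H\"older inequality of the form
\[
\left(\mean{B_{\rho/2}}|Dw|^{p}\,dx\right)^{1/p}\le c\left(\mean{B_\rho}|Dw|^{m(t)}\,dx\right)^{1/m(t)},
\]
with constant depending on $\|Dw\|_{L^\kappa(B_r)}$. By \eqref{m.cont} the right-hand exponent is strictly below $p$.

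\textbf{Step 2: iteration down to $\kappa$.} We apply Step 1 with $t=p$, then with $t=m(p)$, and so on, $N_0+1$ times. At each stage a standard covering argument lets us lower the exponent on the right, since $m(t)<t$. The choice of $N_0$ guarantees that after finitely many steps the exponent falls to $\kappa$. Lowering further to any $\theta\in(0,\kappa]$ is the usual interpolation/covering trick (as in Giaquinta's lemma, using Lemma \ref{tech.lemma}). Since the exponents are fixed by $n,p,q,\alpha$, the dependence on $N_0$ is harmless. The output of this step is
\[
\left(\mean{B_{\eta r}}|Dw|^p\,dx\right)^{1/p}\le c\left(\mean{B_r}|Dw|^\theta\,dx\right)^{1/\theta}.
\]

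\textbf{Step 3: higher integrability up to $\tilde q$.} On $B_\rho$ we freeze the coefficient, setting $a_m=\inf_{B_\rho}a$, and compare $w$ with the solution $v$ of the problem with vector field $A(x_0,\cdot)$ and frozen $a_m$. Since $a_m\lesssim r^\alpha$, this $v$ satisfies $(p,q)$-Lipschitz estimates with controlled constants. Using \eqref{mono} and $\eqref{growth}_3$, the comparison error is bounded by
\[
\rho^{\alpha}\mean{B_\rho}|Dw|^{q}\,dx,
\]
with the singular case handled via \eqref{Vprop}. This gives a decay/Campanato estimate for $|Dw|^p$ in Morrey scale, namely $|Dw|^p$ lies in $L^{p,n-2\alpha p/p+\dots}$. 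Feeding in the $L^p$ bound from Step 2 and iterating yields $Dw\in L^{\tilde q}$ for every $\tilde q<np/(n-2\alpha)$. A direct check using \eqref{rate.sola} gives
\[
2q-p<\frac{np}{n-2\alpha},
\]
which proves the final inclusion.

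We expect Step 1 to be the main obstacle. The difficulty is bounding the $q$-term purely through $\|Dw\|_{L^\kappa}$ instead of $\|Dw\|_{L^p}$, while controlling the power of $\rho$ uniformly in both the singular and degenerate regimes. This is precisely where the choice of $p_0$ and $\kappa$ in \eqref{def.kappa} is needed.
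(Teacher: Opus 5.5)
Your architecture (a reverse H\"older inequality with constant depending only on $\|Dw\|_{L^{\kappa}}$, a descent through the iterates of $m$, then higher integrability) is reasonable. However, neither substantive step is carried out, and as written both fail.

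\textbf{Steps 1--2.} Testing with $(w-(w)_{B_\rho})\phi^q$ gives the natural Caccioppoli inequality, with terms $|w-(w)|^p/\rho^p$ and $a(x)|w-(w)|^q/\rho^q$. Sobolev--Poincar\'e bounds the second by $r^{\alpha}\bigl(\mean{B_\rho}|Dw|^{q_*}\,dx\bigr)^{q/q_*}$. So the smallest exponent that can appear is $m(p)=q_*$. An embedding into $L^{t+q-p}$ with $t<p$ is useless here, since it does not control $\mean{B_\rho}|w-(w)|^q\,dx$.

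Writing this term as a multiple of $(\mean{B_\rho}|Dw|^{q_*}dx)^{p/q_*}$ leaves the factor $r^{\alpha}(\mean{B_\rho}|Dw|^{q_*}\,dx)^{(q-p)/q_*}$. The constant therefore depends on $\|Dw\|_{L^{q_*}}$, which $\|Dw\|_{L^{\kappa}}$ does not control when $q_*>\kappa$; this is exactly where your $\rho^{-?}$ sits.

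Step~2 cannot repair this. Each application of Step~1 has $\|Dw\|_{L^p}$ on the left and reproduces the same $q_*$-dependent constant. Gehring's lemma raises exponents; it does not lower them.

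A descent $p\to m(p)\to\cdots\to m^{N_0+1}(p)\le\kappa$ only closes if, at each level $t\in[p_0,p]$, you have an estimate with $\mean{B_{\rho/2}}|Dw|^t\,dx$ on the left and $|w-k|^t$, $a(x)|w-k|^{t+q-p}$ on the right. That is, you need a Caccioppoli inequality below the natural exponent. One way is to test with $(w-k)(\delta+|w-k|)^{t-p}\phi^q$, which is coercive because $p-t<1$ for $t\ge p_0$, and then apply H\"older with exponents $p/t$ and $p/(p-t)$. Sobolev with $m(t)$ then gives, in averaged form on concentric balls, $\|Dw\|_{L^t}\lesssim\|Dw\|_{L^{m(t)}}$ with a constant involving $r^{\alpha-n(q-p)/m(t)}\|Dw\|_{L^{m(t)}}^{q-p}$. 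The next level controls $\|Dw\|_{L^{m(t)}}$, and the last level uses $\kappa>n(q-p)/\alpha$.

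Alternatively, you can keep your test function but interpolate $\|w-(w)\|_{L^q}$ between $L^{p^*}$ and $L^{\kappa^*}$ and absorb the $L^p$-part; this is possible because $\kappa>n(q-p)/q$. Either way, some such mechanism has to be supplied.

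\textbf{Step 3.} This step is circular and uses the wrong mechanism.

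First, the comparison error $\rho^{\alpha}\mean{B_\rho}|Dw|^q\,dx$ presupposes $Dw\in L^q$. That is part of the conclusion, and it is precisely what the paper later needs in order to define the frozen comparison $v_{i,j}$. Steps~1--2 control at best $L^{p+\delta}$.

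Second, Morrey-type decay of $\int_{B_\rho}|Dw|^p\,dx$ gives no higher Lebesgue integrability: Morrey spaces $L^{p,\lambda}$ with $\lambda<n$ are not contained in $L^{\tilde q}_{\loc}$ for any $\tilde q>p$. So the range $\tilde q<np/(n-2\alpha)$ cannot come out of it.

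That threshold, together with the exponent $2q-p$ in the statement, points to a fractional differentiability argument. Test with $\tau_{-h}(\phi^2\tau_hw)$. The $x$-dependence of $A$ contributes $[a]_{\alpha}|h|^{\alpha}|Dw|^{q-1}|\tau_hDw|$. Young's inequality against $|\tau_hV_p(Dw)|^2$ costs $|h|^{2\alpha}\int(|Dw|+|Dw(\cdot+h)|)^{2q-p}\,dx$. Hence $V_p(Dw)\in W^{\beta,2}$ for all $\beta<\alpha$, and the fractional Sobolev embedding gives $|Dw|^{p/2}\in L^{2n/(n-2\beta)}$, i.e.\ $Dw\in L^{np/(n-2\beta)}$.

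You would still need to bootstrap (or approximate) to get the integrability required to start. You would also need to make the estimate homogeneous, using $\sup_{B_r}a\lesssim r^{\alpha}$ and the $L^{\kappa}$-control from the first part.

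Your lowering of the exponent to $\theta\in(0,\kappa]$ and your check $2q-p<np/(n-2\alpha)$ are fine.
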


\subsubsection{Estimates in the $(p,q)$-phase}
We now turn to the complementary case \eqref{pq.phase.4r}, which together with \eqref{a.holder} implies
\[ \sup_{x \in B_r}a(x) \le 2\inf_{x\in B_r}a(x). \]
Therefore, we can see that the leading operator in \eqref{homogeneous} satisfies a certain kind of  Orlicz growth condition in $B_r$.
More precisely, the vector field $A(\cdot)$, initially subject to \eqref{growth}, actually satisfies 
\begin{equation*}
\left\{
\begin{aligned}
|A(x,z)| + |\partial A(x,z)||z| &\le L(|z|^{p-1} + \|a\|_{L^{\infty}(B_{r})}|z|^{q-1}),\\
\frac{\nu}{2} (|z|^{p-2} + \|a\|_{L^{\infty}(B_{r})}|z|^{q-2})|\zeta|^{2} &\le \partial A(x,z) \zeta \cdot \zeta
\end{aligned}
\right.
\end{equation*}
for any $x \in B_r$, $z \in \mathbb{R}^n \setminus \{0\}$ and $\zeta \in \mathbb{R}^{n}$. 
In this case, 
reverse H\"older estimates for \eqref{homogeneous} can be obtained in a rather standard way, i.e., by combining the Caccioppoli type inequality and Sobolev--Poincar\'e type inequality. See \cite[Theorem 9]{DieEtt2008} and also \cite[Section 3.2]{SY}. 
To state the result, we introduce the following notations: 
for the functions $H$ and $h$ given in \eqref{def.H} and a point $x \in \Omega$, we consider two functions $H_{x}, h_{x}: [0,\infty) \to [0,\infty)$ defined by
\begin{equation}\label{def.Hx}
H_{x}(t) \coloneqq H(x,t) \quad \text{and} \quad h_{x}(t) \coloneqq h(x,t) \qquad \text{for}\;\; t \ge 0.
\end{equation}

\begin{lemma}\label{lem.pq.phase}
Let $w$ be a weak solution to \eqref{homogeneous} under assumptions \eqref{growth}, \eqref{a.holder}, \eqref{rate.sola} and \eqref{growth.full}. Assume that $B_r$ satisfies \eqref{pq.phase.4r} for some $\bar{L} \in [8,\infty)$. Then for any $\varepsilon \in (0,1)$ and any $x_* \in B_{r/2}$, there exists a constant $c\equiv c(\data,\bar{L},\varepsilon)$ satisfying
\begin{equation*}
\mean{B_{r/2}}H_{x_*}(|Dw|)\,dx \le c\left(\mean{B_{r}}[H_{x_*}(|Dw|)]^{\varepsilon}\,dx\right)^{1/\varepsilon}.
\end{equation*}
Moreover, by choosing $\varepsilon>0$ sufficiently small in the above estimate, we also have
\begin{equation*}
\mean{B_{r/2}}H_{x_{*}}(|Dw|)\,dx \le c(H_{x_*}\circ h_{x_*}^{-1})\left(\mean{B_r}h_{x_*}(|Du|)\,dx\right)
\end{equation*}
for a constant $c\equiv c(\data,\bar{L})$.
\end{lemma}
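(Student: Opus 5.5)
The idea is to reduce to the Orlicz-growth theory in $B_r$: freeze the coefficient at $x_*$ and compare $H(x,\cdot)$ with $H_{x_*}$ uniformly on $B_r$. By \eqref{pq.phase.4r} and \eqref{a.holder}, for every $x\in B_r$,
\[ a(x)\ge \sup_{B_r}a - 2[a]_\alpha r^\alpha \ge \Bigl(1-\tfrac{2}{\bar L}\Bigr)\sup_{B_r}a \ge \tfrac12\sup_{B_r}a . \]
Hence $a(x)\approx a(x_*)\approx\|a\|_{L^\infty(B_r)}$ on $B_r$, with constants $2$. Consequently $H(x,t)\approx H_{x_*}(t)$ and $h(x,t)\approx h_{x_*}(t)$ for all $x\in B_r$, $t\ge0$, with absolute constants. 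Also, by the displayed growth/ellipticity bounds preceding the lemma, $A$ has $\varphi$-growth in $B_r$ with the $N$-function $\varphi=H_{x_*}$. This function satisfies $\Delta_2$ and $\nabla_2$ with constants depending only on $p,q$, and $t\varphi''(t)\approx\varphi'(t)$. The $x$-dependence of $A$ enters only through the structure constants, never through continuity in $x$.

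First step: a Caccioppoli inequality. Test \eqref{homogeneous} with $\eta^{q}(w-(w)_{B_\rho})$, $\eta$ a cutoff between $B_\sigma$ and $B_\rho$, $r/2\le\sigma<\rho\le r$. This is legitimate since $w\in W^{1,H}$. Use \eqref{mono}-type coercivity $A(x,z)\cdot z\gtrsim H_{x_*}(|z|)$ and $|A(x,z)|\lesssim h_{x_*}(|z|)$, then Young's inequality in the form $h_{x_*}(s)t\le\delta H_{x_*}(s)+c_\delta H_{x_*}(t)$, which uses $\Delta_2$ and $\nabla_2$. This yields
\[ \mean{B_\sigma}H_{x_*}(|Dw|)\,dx\le c\mean{B_\rho}H_{x_*}\Bigl(\tfrac{|w-(w)_{B_\rho}|}{\rho-\sigma}\Bigr)dx . \]

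Second step: a Sobolev--Poincar\'e inequality in Orlicz form. Use the inequality of \cite{DieEtt2008}: there is $\theta\in(0,1)$, depending on $p,q,n$, with
\[ \mean{B}H_{x_*}\Bigl(\tfrac{|w-(w)_B|}{\rho}\Bigr)dx\le c\Bigl(\mean{B}[H_{x_*}(|Dw|)]^\theta dx\Bigr)^{1/\theta}. \]
This holds because $H_{x_*}^\theta$ is still quasi-convex-comparable for $\theta>\max\{1/p, n/(n+1)\}$-type thresholds. Combining the two steps gives a weak reverse H\"older inequality with exponent $\theta$. Gehring's lemma is not needed here; instead, to lower the exponent from $\theta$ to arbitrary $\varepsilon$, I would run the standard interpolation/covering argument. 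Write $[H]^\theta\le [H]^{\varepsilon\lambda}[H]^{(1-\lambda)}$ with H\"older, absorb via Young with a small parameter, and close with Lemma \ref{tech.lemma} over radii in $[r/2,r]$. Here the ratio $(\rho-\sigma)^{-1}$ produces the $C_1/(r_2-r_1)^\ell$ term. This absorption step, keeping everything at the level of averages with constants independent of $\|Dw\|$, is the main technical point. It works because every estimate above depends only on $\data,\bar L$.

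Second assertion: fix $\varepsilon$ small enough that $t\mapsto [H_{x_*}((h_{x_*})^{-1}(t))]^{\varepsilon}$ is concave. This is possible since $H_{x_*}\circ h_{x_*}^{-1}(t)$ grows at most like $t^{q/(q-1)}$ (by $t h_{x_*}(t)=H_{x_*}(t)$), so $\varepsilon\le (q-1)/q$ suffices, up to a comparable concave minorant. Then Jensen's inequality gives
\[ \Bigl(\mean{B_r}[H_{x_*}(|Dw|)]^{\varepsilon}\Bigr)^{1/\varepsilon}=\Bigl(\mean{B_r}[(H_{x_*}\circ h_{x_*}^{-1})(h_{x_*}(|Dw|))]^{\varepsilon}\Bigr)^{1/\varepsilon}\le c\,(H_{x_*}\circ h_{x_*}^{-1})\Bigl(\mean{B_r}h_{x_*}(|Dw|)\,dx\Bigr), \]
which is the claim (reading $Du$ there as $Dw$). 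The concavity claim may hold only up to equivalence. If so, I would replace the function by its least concave majorant, which is comparable thanks to the power-type bounds $t^{p/(p-1)}\lesssim\cdot\lesssim t^{q/(q-1)}$ near scaling.
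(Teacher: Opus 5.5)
Your route matches what the paper intends. The paper gives no argument of its own: it only notes that in the $(p,q)$-phase $a$ is comparable to a constant on $B_r$, so $A$ has Orlicz growth, and then cites Caccioppoli plus the Sobolev--Poincar\'e inequality of Diening--Ettwein. Your exponent-lowering via H\"older, Young and Lemma~\ref{tech.lemma}, and your Jensen step for the second claim, are the standard way to fill this in.

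There is one error in the second part: you have the growth indices of $G \coloneqq H_{x_*}\circ h_{x_*}^{-1}$ reversed.
\begin{itemize}
\item We have $G(t)=t\,h_{x_*}^{-1}(t)$, and $s/h_{x_*}(s)^{1/(p-1)}=(1+a(x_*)s^{q-p})^{-1/(p-1)}$ is nonincreasing. Hence $G(t)/t^{p/(p-1)}$ is nonincreasing, which means $\lambda^{q/(q-1)}G(t)\le G(\lambda t)\le \lambda^{p/(p-1)}G(t)$ for $\lambda\ge 1$.
\item Moreover $G(t)\approx t^{p/(p-1)}$ for small $t$.
\item So with your choice $\varepsilon=(q-1)/q$, the function $G^{\varepsilon}$ behaves like $t^{p(q-1)/(q(p-1))}$ near $0$. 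This exponent is strictly larger than $1$ because $q>p$.
\item Any nonnegative concave $\Phi$ satisfies $\Phi(t)\ge t\,\Phi(1)$ for $t\le 1$. Therefore $G^{\varepsilon}$ is not comparable to any concave function, and its least concave majorant is not comparable to it either. Jensen cannot be applied as you describe.
\end{itemize}

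The fix is to take $\varepsilon\le (p-1)/p$. Then $G^{\varepsilon}$ is increasing and $G^{\varepsilon}(t)/t$ is nonincreasing, so $G^{\varepsilon}$ lies within a factor $2$ of its least concave majorant. Jensen then yields the second estimate with $c=c(\data,\bar L)$. This choice is admissible because your first estimate holds for every $\varepsilon\in(0,1)$.
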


\subsubsection{Combining the two phases}
Here we choose $\bar{L} = 8$ and then combine the estimates obtained in each of the cases \eqref{p.phase.4r} and \eqref{pq.phase.4r}. Then we arrive at the following corollary, which extends \cite[Remark 3.6]{SY}:

\begin{corollary}\label{revhol.cor}
Let $w$ be a weak solution to \eqref{homoeq} under assumptions \eqref{growth}, \eqref{a.holder}, \eqref{rate.sola} and \eqref{growth.full}. Then, with the exponent $\kappa$ given in \eqref{def.kappa}, we have
\begin{equation*}
\left(\mean{B_{r/2}}|Dw|^p\,dx\right)^{1/p} \le c\left(\mean{B_r}|Dw|^{\kappa}\,dx\right)^{1/\kappa}
\end{equation*}
for a constant $c\equiv c(n,p,q,\nu,L,\|a\|_{L^{\infty}}, \alpha, [a]_{\alpha}, \|Dw\|_{L^{\kappa}(B_r)})$. 
\end{corollary}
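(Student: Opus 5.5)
We prove Corollary \ref{revhol.cor} by taking $\bar L=8$ and splitting according to whether $B_r$ satisfies \eqref{p.phase.4r} or \eqref{pq.phase.4r}. In each case we invoke the corresponding lemma and then convert its conclusion into a $p$-to-$\kappa$ reverse H\"older inequality.

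\emph{$p$-phase.} Assume \eqref{p.phase.4r} holds with $\bar L=8$. Apply Lemma \ref{lem:higher} with $\eta=1/2$, $\theta=\kappa$, and $\tilde q=p$; this choice is admissible since $p<np/(n-2\alpha)$. It gives directly
\[
\Big(\mean{B_{r/2}}|Dw|^p\,dx\Big)^{1/p}\le c\Big(\mean{B_r}|Dw|^\kappa\,dx\Big)^{1/\kappa},
\]
with $c\equiv c(\data,\|Dw\|_{L^\kappa(B_r)})$. Nothing more is needed here.

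\emph{$(p,q)$-phase.} Assume \eqref{pq.phase.4r} holds. Fix $x_*\in B_{r/2}$, say the center, and apply the first estimate of Lemma \ref{lem.pq.phase} with a small $\varepsilon\in(0,1)$ to be chosen. Since $t^p\le H_{x_*}(t)$, the left-hand side dominates $\mean{B_{r/2}}|Dw|^p\,dx$. We bound the right-hand side using $H_{x_*}(t)\le t^p+\|a\|_{L^\infty}t^q$ and $(A+B)^\varepsilon\le A^\varepsilon+B^\varepsilon$:
\[
\Big(\mean{B_r}[H_{x_*}(|Dw|)]^\varepsilon dx\Big)^{1/\varepsilon}\le c\Big(\mean{B_r}|Dw|^{p\varepsilon}dx\Big)^{1/\varepsilon}+c\Big(\mean{B_r}|Dw|^{q\varepsilon}dx\Big)^{1/\varepsilon}.
\]
Now choose $\varepsilon$ so small that $q\varepsilon\le\kappa$; this is possible since $\kappa>1$, and $\varepsilon$ then depends only on $n,p,q,\alpha$. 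By Jensen's inequality the right-hand side is at most $c\big[(\mean{B_r}|Dw|^\kappa)^{p/\kappa}+(\mean{B_r}|Dw|^\kappa)^{q/\kappa}\big]$.

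The $q$-term has the wrong homogeneity, and handling it is the main obstacle. We factor it as
\[
\Big(\mean{B_r}|Dw|^\kappa\Big)^{q/\kappa}=\Big(\mean{B_r}|Dw|^\kappa\Big)^{p/\kappa}\Big(\mean{B_r}|Dw|^\kappa\Big)^{(q-p)/\kappa},
\]
and estimate the extra factor by $\big(|B_r|^{-1}\|Dw\|_{L^\kappa(B_r)}^\kappa\big)^{(q-p)/\kappa}\approx r^{-n(q-p)/\kappa}\|Dw\|^{q-p}_{L^\kappa(B_r)}$, which blows up as $r\to0$. To absorb it we use the $(p,q)$-phase condition: $r^\alpha\le \sup_{B_r} a/(8[a]_\alpha)\le \|a\|_{L^\infty}/(8[a]_\alpha)$, so $r$ is bounded below by a constant depending on $\data$. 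Hence $r^{-n(q-p)/\kappa}\le c(\data)$. This lower bound on $r$ is exactly what makes the estimate work: the only singular quantity is $r$, and the phase condition bounds it away from zero. The factor $\|Dw\|_{L^\kappa(B_r)}^{q-p}$ then enters the constant, which is permitted.

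Combining the two terms and taking $p$-th roots gives the claim in the $(p,q)$-phase. In both cases the constant depends only on $\data$ and $\|Dw\|_{L^\kappa(B_r)}$, which proves the corollary.
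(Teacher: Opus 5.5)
Your $p$-phase step is correct, but the $(p,q)$-phase step has a genuine gap: the inference that \eqref{pq.phase.4r} bounds $r$ from below is backwards. From $\sup_{B_r}a\ge 8[a]_\alpha r^\alpha$ you only get $r^\alpha\le \|a\|_{L^\infty}/(8[a]_\alpha)$, which is an \emph{upper} bound on $r$. In fact, if $a(x_0)>0$, then every sufficiently small ball centered at $x_0$ lies in the $(p,q)$-phase. Hence the factor $\|a\|_{L^\infty}r^{-n(q-p)/\kappa}\|Dw\|_{L^{\kappa}(B_r)}^{q-p}$ that you produce is not bounded in terms of $\data$ and $\|Dw\|_{L^\kappa(B_r)}$, and it blows up as $r\to0$. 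Trading the surplus $q-p$ powers of the $L^\kappa$-average for $r^{-n(q-p)/\kappa}$ only works when this factor is compensated by smallness of $a$, namely $a\lesssim r^{\alpha}$ together with $\alpha>n(q-p)/\kappa$ from \eqref{kappa.range}. That is the $p$-phase mechanism (used, e.g., in Steps 6 and 8 of the proof of Theorem \ref{main.thm}), and it is unavailable here. The damage is actually done one step earlier, when you replace $a(x_*)$ by $\|a\|_{L^\infty}$.

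The missing idea is to keep $a(x_*)$ and use the homogeneity of $H_{x_*}$ on both sides. Put $S:=(\mean{B_{r/2}}|Dw|^p\,dx)^{1/p}$ and $T:=(\mean{B_r}|Dw|^\kappa\,dx)^{1/\kappa}$. With $q\varepsilon\le\kappa$, your own computation, with $a(x_*)$ in place of $\|a\|_{L^\infty}$, shows that the right-hand side of the first estimate in Lemma \ref{lem.pq.phase} is at most $c(T^p+a(x_*)T^q)=cH_{x_*}(T)$. On the other side, Jensen's inequality $\mean{B_{r/2}}|Dw|^q\,dx\ge S^q$ shows that its left-hand side is at least $S^p+a(x_*)S^q=H_{x_*}(S)$. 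Since $H_{x_*}$ is increasing and $H_{x_*}(\lambda t)\ge\lambda^pH_{x_*}(t)$ for $\lambda\ge1$, the inequality $H_{x_*}(S)\le cH_{x_*}(T)$ forces $S\le c^{1/p}T$ with $c\equiv c(\data)$. In this phase you need neither a bound on $r$ nor any dependence on $\|Dw\|_{L^\kappa(B_r)}$. Combined with your $p$-phase argument, this proves the corollary via the same phase splitting with $\bar L=8$ that the paper indicates.
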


We have fixed the parameter $\bar{L}$ to get Corollary~\ref{revhol.cor} for any ball $B_{r} \Subset \Omega$.
We will use the estimate in the proof of Lemma~\ref{lem.rev.u} which holds regardless of the phases \eqref{p.phase.4r} and \eqref{pq.phase.4r}.
\subsection{Lipschitz regularity for homogeneous frozen equations}
We end this section with a Lipschitz estimate for homogeneous equations with certain kinds of Orlicz growth. Consider a $C^{0}(\mathbb{R}^n) \cap C^{1}(\mathbb{R}^n \setminus \{0\})$-vector field $A_{0}:\mathbb{R}^{n}\rightarrow\mathbb{R}^{n}$ satisfying
\begin{equation}\label{growth.fixed}
\left\{
\begin{aligned}
|A_{0}(z)| + |\partial A_{0}(z)||z| &\le L(|z|^{p-1}+a_{0}|z|^{q-1}), \\
\nu(|z|^{p-2}+a_{0}|z|^{q-2})|\xi|^{2} &\le \partial A_{0}(z)\zeta\cdot\zeta
\end{aligned}
\right.
\end{equation}
for any $z \in \mathbb{R}^n \setminus \{0\}$ and $\zeta \in \mathbb{R}^{n}$, where $a_{0} \ge 0$ is a fixed constant. 
The following local Lipschitz estimate can be found in  \cite{Ba15,Lie91}. 
\begin{lemma}\label{lem.Lip}
Let $v$ be a weak solution to 
\begin{equation*}
-\mathrm{div}\,A_{0}(Dv)=0 \quad \textrm{in}\;\; \Omega
\end{equation*} 
under assumptions \eqref{growth.fixed} with $1 < p < q < \infty$. Then $Dv \in L^{\infty}_{\loc}(\Omega)$. Moreover, there exists a constant $c \equiv c(n,p,q,\nu,L)$, which is in particular independent of $a_{0}$,  such that
\begin{equation*}
\sup_{B_{r}}|Dv| \le c\mean{B_{2r}}|Dv|\,dx
\end{equation*} 
holds whenever $B_{2r} \Subset \Omega$.
\end{lemma}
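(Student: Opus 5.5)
The plan is to view $A_0$ as a vector field with \emph{Orlicz growth} and to make every constant depend only on the ratio $t g'(t)/g(t)$, which does not see $a_0$. Set
\[
g(t) \coloneqq t^{p-1} + a_0 t^{q-1}, \qquad G(t) \coloneqq \int_0^t g(s)\,ds \approx t^p + a_0 t^q .
\]
Then
\[
p-1 \le \frac{t g'(t)}{g(t)} \le q-1 \qquad \text{for all } t>0 ,
\]
uniformly in $a_0 \ge 0$. Moreover, \eqref{growth.fixed} says precisely that
\[
|A_0(z)| \le L g(|z|), \qquad |\partial A_0(z)| \le L\,\frac{g(|z|)}{|z|}, \qquad \partial A_0(z)\zeta\cdot\zeta \ge \nu\,\frac{g(|z|)}{|z|}\,|\zeta|^2 .
\]
This is exactly Lieberman's structure \cite{Lie91}, with ellipticity ratio $L/\nu$. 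Since $a_0$ never appears separately, I will invoke that theory (or \cite{Ba15}) as a black box for the first step.

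\textbf{Step 1 (regularization).} Approximate $A_0$ by smooth nondegenerate fields with the same structure constants, for instance by mollifying in $z$ and adding $\epsilon(\epsilon^2+|z|^2)^{(p-2)/2}z$. Solve the corresponding Dirichlet problems with boundary datum $v$ on a ball $B_{2r}$. The approximate solutions converge to $v$ in $W^{1,G}$ by monotonicity, which follows from \eqref{mono}-type inequalities. It is therefore enough to prove a priori estimates, with constants independent of $a_0$ and $\epsilon$, for smooth solutions.

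\textbf{Step 2 ($L^\infty$--$G$ estimate).} Differentiate the equation in $x_k$ and test with $\eta^2 D_k v\,\Phi(|Dv|)$ for suitable increasing $\Phi$. Using the bounded ellipticity ratio $L/\nu$ and the bounds on $tg'/g$, this yields Caccioppoli inequalities for powers of $G(|Dv|)$. A Moser iteration (equivalently, De Giorgi on $G(|Dv|)$, as in \cite{Lie91}) then gives
\[
G\Big(\sup_{B_{\rho}}|Dv|\Big) \le c \Big(\frac{\varrho}{\varrho-\rho}\Big)^{n} \mean{B_{\varrho}} G(|Dv|)\,dx , \qquad B_\rho \subset B_\varrho ,
\]
with $c=c(n,p,q,\nu,L)$. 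The scaling in $\varrho-\rho$ is obtained by covering.

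I expect this step to be the main obstacle. The point is to check that no constant in the iteration depends on $a_0$. This holds because every inequality used involves only $g(|Dv|)/|Dv|$ and the indices $p-1$, $q-1$, so it is invariant under the two-parameter family $g$.

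\textbf{Step 3 (reduction to the $L^1$ average).} Fix $r\le r_1<r_2\le 2r$ and put $S\coloneqq\sup_{B_{r_2}}|Dv|$. Since $G(t)/t$ is increasing, $G(|Dv|)\le |Dv|\,G(S)/S$ on $B_{r_2}$. Inserting this into Step 2 gives $G(\sup_{B_{r_1}}|Dv|)\le \lambda G(S)$ with
\[
\lambda \coloneqq c\,\Big(\frac{r}{r_2-r_1}\Big)^n\frac{1}{S}\mean{B_{2r}}|Dv|\,dx .
\]
Because $G(\lambda^{1/p} t) \ge \min\{\lambda,\lambda^{q/p}\} G(t)$ up to constants, we get $G^{-1}(\lambda G(S))\le c\max\{\lambda^{1/p},\lambda^{1/q}\}S$. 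Young's inequality then yields
\[
\sup_{B_{r_1}}|Dv| \le \tfrac12 S + c\Big(\frac{r}{r_2-r_1}\Big)^{n}\mean{B_{2r}}|Dv|\,dx .
\]
Applying Lemma~\ref{tech.lemma} to $Z(t)=\sup_{B_t}|Dv|$ on $[r,2r]$ gives $\sup_{B_r}|Dv|\le c\mean{B_{2r}}|Dv|\,dx$ with $c=c(n,p,q,\nu,L)$. Finally, pass to the limit in the regularization of Step 1, which preserves the estimate, and conclude that $Dv\in L^\infty_{\loc}(\Omega)$.
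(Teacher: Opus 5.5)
Your proposal is correct and follows the route the paper relies on: the paper gives no proof of its own but cites \cite{Lie91} for the bound $\sup_{B_{\varrho/2}}G(|Dv|)\le c\mean{B_{\varrho}}G(|Dv|)\,dx$ (with $c$ depending only on $n$, the indices $p-1\le tg'(t)/g(t)\le q-1$ and $L/\nu$, hence not on $a_0$), and \cite{Ba15} for the reduction to the $L^1$ average, which is exactly your Step 3. One small point: run the iteration in Step 3 for $v$ itself, where $Z(t)=\sup_{B_t}|Dv|$ is finite on all of $[r,2r]$ because $B_{2r}\Subset\Omega$, rather than for approximants solving a Dirichlet problem on $B_{2r}$, whose gradients are only known to be bounded in the interior.
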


\section{Comparison estimates}\label{sec.comp}

In this section, we establish comparison estimates between \eqref{main.eq} and a Dirichlet problem involving \eqref{intro.homo}. For this, throughout this section, we assume
\begin{equation}\label{regular.data}
\mu \in L^{\infty}(\Omega), \qquad u \in W^{1,H}_{0}(\Omega). 
\end{equation}
We fix a radius $R_* \equiv R_*(n,p,q,\alpha,|\mu|(\Omega), \|Du\|_{L^{\gamma_0}(\Omega)}) >0$ satisfying
\begin{equation}\label{R0.ini}
\max\left\{ R_{*}^{n-\frac{(n-1)\kappa}{p-1}}[|\mu|(\Omega)]^{\frac{\kappa}{p-1}}, R_{*}^{\alpha-(n-1)\frac{q-p}{p-1}}[|\mu|(\Omega)]^{\frac{q-p}{p-1}}, R_{*}^{\sigma_0}[|\mu|(\Omega)]^{q-p}\|Du\|_{L^{\gamma_0}(\Omega)}^{(2-p)(q-p)} \right\} \le 1,
\end{equation}
where the constants $\gamma_0 \equiv \gamma_0(n,p,q,\alpha) \in (0,n(p-1)/(n-1)) $ and $\sigma_0 \equiv \sigma_0(n,p,q,\alpha)>0$ are given in \eqref{def.gamma0} and \eqref{def.sigma0}, respectively. Note that this is possible due to \eqref{rate.sola}, \eqref{kappa.range} and \eqref{def.sigma0}. 
Later in the proof of Theorem \ref{main.thm}, we will apply the estimates obtained in this section to weak solutions $u_j$ to the regularized problems \eqref{approx.prob} as described in Definition \ref{def:sola}. 
We remark that it is possible to make the radius $R_*$ stable in the approximating procedure, by choosing $j \in \mathbb{N}$ large enough to satisfy
\[ \|Du_j\|_{L^{\gamma_0}(\Omega)} \le 2\|Du\|_{L^{\gamma_0}(\Omega)} \qquad \text{and} \qquad |\mu_j|(\Omega) \le 2\|\mu|(\Omega) \]

We take a ball $B_{4r} \Subset \Omega$ with $4r \le R_*$ and consider a Dirichlet problem
\begin{equation}\label{homogeneous}
\left\{
\begin{aligned}
-\mathrm{div}\,A(x,Dw) & = 0 &\text{in }& B_{2r}, \\
w & = u & \text{on }& \partial B_{2r}.
\end{aligned}
\right.
\end{equation}
Due to \eqref{regular.data}, the existence of a unique weak solution $w \in u + W^{1,H}_{0}(B_{2r})$ to \eqref{homogeneous} can be proved via a monotonicity method in Musielak--Orlicz spaces, see for instance \cite[Lemma~3.4]{BL22}.

We start with a preliminary version of comparison estimate between \eqref{main.eq} and \eqref{homogeneous}. 
\begin{lemma}\label{lem.comp.pre}
Let $u$ be the weak solution to \eqref{main.eq} under assumptions \eqref{growth}--\eqref{rate.sola}, and let $w$ be the weak solution to \eqref{homogeneous}. Then we have 
\begin{equation}\label{comp.pre}
\mean{B_{2r}}|Du-Dw|^{s}\,dx \le c\left[\frac{|\mu|(B_{2r})}{(2r)^{n-1}}\right]^{s/(p-1)} + c\left[\frac{|\mu|(B_{2r})}{(2r)^{n-1}}\right]^{s}\left(\mean{B_{2r}}|Du|^{s}\,dx\right)^{2-p}
\end{equation}
for any $s$ satisfying \eqref{exp.range}, where $c\equiv c(n,p,\nu,L,s)$. 
\end{lemma}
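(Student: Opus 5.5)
We follow the standard truncation argument of Boccardo--Gallou\"et type, as in the $p$-Laplacian case \cite{DM10JFA}. By scaling, set $\tilde u(x)=u(x_0+2rx)/(2r)$ and similarly $\tilde w$, $\tilde\mu$, so that we may assume $2r=1$ and $B_{2r}=B_1$. The rescaled vector field satisfies \eqref{growth} with the modulating coefficient $a(x_0+2r\,\cdot)$, and only the monotonicity \eqref{mono} will be used, so the constants do not depend on $a$. Put $M:=|\mu|(B_1)$. We then test the difference of the weak formulations of \eqref{main.eq} and \eqref{homogeneous} with $T_k(u-w)\in W^{1,H}_0(B_1)$, where $T_k(t)=\max\{-k,\min\{k,t\}\}$. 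Since $\mu\in L^\infty$ by \eqref{regular.data}, this test function is admissible, and \eqref{mono} gives
\[
\int_{\{|u-w|\le k\}} |V_p(Du)-V_p(Dw)|^2\,dx \le c\,kM\qquad\text{for every } k>0,
\]
where we discard the nonnegative $a(x)|V_q(Du)-V_q(Dw)|^2$ part.

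Next we pass from $V_p$ to $|Du-Dw|$, which is where the singular case matters. By \eqref{VV}, $|Du-Dw|^2\le c(|Du|+|Dw|)^{2-p}|V_p(Du)-V_p(Dw)|^2$, and $|Dw|\le |Du|+|Du-Dw|$. Combining these with H\"older's inequality on the level sets $D_k=\{k<|u-w|\le 2k\}$ (and on $\{|u-w|\le k\}$), for an exponent $s<p$ we get
\[
\int_{D_k}|Du-Dw|^s\,dx \le c\,(kM)^{s/2}\Big(\int_{D_k}(|Du|+|Du-Dw|)^{s}\,dx\Big)^{(2-p)/2}|D_k|^{1-s/2-s(2-p)/(2s)}.
\]
To bound $|D_k|$, we use Sobolev's inequality for $u-w\in W^{1,s}_0(B_1)$: $|\{|u-w|>k\}|\le k^{-s^*}\|D(u-w)\|_{L^s}^{s^*}$. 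Summing over dyadic $k=2^j\lambda$ and choosing $\lambda$ optimally, in the usual Boccardo--Gallou\"et fashion, gives the nonlinear inequality
\[
X:=\int_{B_1}|Du-Dw|^s \le c\,M^{s/2}\,X^{\theta_1}\,(X+Y)^{\theta_2}\,(\cdots),\qquad Y:=\int_{B_1}|Du|^s,
\]
with exponents that add up correctly precisely when $s<n(p-1)/(n-1)$, i.e.\ in the range \eqref{exp.range}.

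This bookkeeping is the step we expect to be the main obstacle. Rather than chase general exponents, we would follow the known $p$-Laplacian proof: \cite[Lemma~4.1]{DM10JFA} handles $2-1/n<p<2$ with exactly the output \eqref{comp.pre}, using only \eqref{mono} and \eqref{VV}. We would reproduce it with $A(x,\cdot)$ in place of $|z|^{p-2}z$. In the end, Young's inequality splits the right-hand side into $\tfrac12 X$ plus either $cM^{s/(p-1)}$ (when $|Dw|$ is dominated by $|Du-Dw|$) or $cM^{s}Y^{2-p}$ (when $|Du|$ dominates). The homogeneities confirm this split: $M^{s}Y^{2-p}$ scales like $|Du|^{s(p-1)+s(2-p)}=|Du|^s$, as it must. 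Absorbing $\tfrac12X$ (which is finite because $u,w\in W^{1,p}$ and $s<p$), and undoing the scaling so that $M$ becomes $|\mu|(B_{2r})/(2r)^{n-1}$ and integrals become averages, yields \eqref{comp.pre}. The constant depends only on $n,p,\nu,L,s$, since $q$ and $a$ enter only through the discarded nonnegative term.
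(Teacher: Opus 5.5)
Your proof is correct, but it follows a different route from the paper. The paper does not use truncations $T_k(u-w)$ and dyadic level sets. It tests with $\varphi_\pm = d^{1-\xi}-(d+(u-w)_\pm)^{1-\xi}$, $d>0$, $\xi>1$, which gives the single weighted inequality \eqref{weighted.comparison} on the whole of $B_{2r}$. From there (citing \cite[Lemma 4.1]{Ba17}) one writes $|Du-Dw|^s$ as the weighted $V_p$-quantity times $(d+|u-w|)^{\xi s/2}(|Du|+|Dw|)^{(2-p)s/2}$, applies H\"older once, controls the power of $d+|u-w|$ by Sobolev, and optimizes in $d$. No series has to be summed, and the restriction \eqref{exp.range} enters through the need to have $\xi>1$ and $\xi s/(p-s)\le s^*$ simultaneously.

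In your version the same restriction is the convergence condition $s<s^*(p-s)$ of the geometric series over the dyadic levels. With $\lambda=X^{1/s}$ the sum gives exactly $X\le cM^{s/2}X^{1/2}(X+Y)^{(2-p)/2}$, i.e.\ $X\le cM^{s}(X+Y)^{2-p}$, after which your Young step yields \eqref{comp.pre}. So the bookkeeping you flagged as the main obstacle is routine and closes.

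Your argument is more elementary. The paper's is shorter and has a structural payoff: \eqref{weighted.comparison} retains the term $a(x)|V_q(Du)-V_q(Dw)|^2$. This term is reused in the proof of Lemma \ref{lem.1st.h} (with $\tilde\mu=\mu/\|a\|_{L^\infty(B_{4r})}$) to obtain the $(q-1)$-comparison in the $(p,q)$-phase. If you adopt your route, keep that term in the truncation estimate instead of discarding it, so it can serve the same purpose later.
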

\begin{proof}
Let $d>0$ and $\xi>1$ be any numbers. Taking $\varphi_{\pm} \coloneqq d^{1-\xi} -(d+(u-w)_{\pm})^{1-\xi}$ as a test function in the weak formulation of
\[ -\mathrm{div}\,(A(x,Du)-A(x,Dw)) = \mu \quad \text{in } B_{2r}, \]
we obtain
\begin{equation*}
\begin{aligned}
& \left| (\xi-1)\int_{B_{2r}}\frac{(A(x,Du)-A(x,Dw))\cdot D(u-w)_{\pm}}{(d+(u-w)_{\pm})^{\xi}}\,dx \right| \\
& = \left| \int_{B_{2r}}(A(x,Du)-A(x,Dw))\cdot D\varphi_{\pm} \,dx \right|  \\
& = \left| \int_{B_{2r}}\varphi_{\pm}\,d\mu \right| \le d^{1-\xi}|\mu|(B_{2r}).
\end{aligned}
\end{equation*}
Thus, \eqref{mono} and an elementary manipulation lead to
\begin{align}\label{weighted.comparison}
\int_{B_{2r}} \frac{|V_{p}(Du)-V_{p}(Dw)|^2 + a(x)|V_{q}(Du)-V_{q}(Dw)|^{2}}{(d+|u-w|)^\xi} \,dx \le c\frac{d^{1-\xi}}{\xi-1}|\mu|(B_{2r})
\end{align}
for some $c\equiv c(n,p,q,\nu,L)$. In particular, we have
\begin{align*}
\int_{B_{2r}} \frac{|V_{p}(Du)-V_{p}(Dw)|^2}{(d+|u-w|)^\xi} \,dx \leq c\frac{d^{1-\xi}}{\xi-1}|\mu|(B_{2r}).
\end{align*}
From this estimate, we can derive \eqref{comp.pre} in a rather standard way, see for instance \cite[Lemma 4.1]{Ba17}.
\end{proof}

The following lemma is concerned with a reverse H\"older type estimate for $Du$.
\begin{lemma}\label{lem.rev.u}
Let $u$ be the weak solution to \eqref{main.eq} under assumptions \eqref{growth}--\eqref{rate.sola}, and let $B_r \Subset \Omega$ be a ball with $r \le R_*$. Then we have
\begin{equation*}
\left(\mean{B_{\eta r}}|Du|^{s}\,dx\right)^{1/s} \le c\left(\mean{B_{r}}|Du|^{\theta}\,dx\right)^{1/\theta} + c\left[\frac{|\mu|(B_r)}{r^{n-1}}\right]^{1/(p-1)}
\end{equation*}
for any $s$ satisfying \eqref{exp.range}, $\theta \in (0,s]$ and $\eta \in (0,1)$, where $c\equiv c(\data, \|Du\|_{L^{\kappa}(\Omega)}, s, \theta, \eta)$.
\end{lemma}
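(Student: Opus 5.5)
We follow the scheme of \cite[Proposition~4.1]{KM12JFA}: combine the preliminary comparison estimate of Lemma~\ref{lem.comp.pre} with the reverse H\"older estimate of Corollary~\ref{revhol.cor}, and then absorb terms with a covering/iteration argument based on Lemma~\ref{tech.lemma}.

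\textbf{Step 1: a one-scale estimate.} Fix $s$ in the range \eqref{exp.range}. We first treat the case where $s$ is close to $n(p-1)/(n-1)$, in particular $s\ge\kappa$; smaller $s$ follow by H\"older's inequality. Take any ball $B_{2\rho}(y)\subset B_r$ and let $w$ solve \eqref{homogeneous} on $B_{2\rho}(y)$. Since $\kappa\le s$ and $p>s$, H\"older's inequality and Corollary~\ref{revhol.cor} applied on $B_{2\rho}(y)$ give
\[
\Big(\mean{B_{\rho}(y)}|Dw|^s\Big)^{1/s}\le c\Big(\mean{B_{2\rho}(y)}|Dw|^{\kappa}\Big)^{1/\kappa}\le c\Big(\mean{B_{2\rho}(y)}|Dw|^{s}\Big)^{1/s}.
\]
The constant here depends on $\|Dw\|_{L^\kappa(B_{2\rho})}$. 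This norm is controlled by $\|Du\|_{L^\kappa(\Omega)}$ together with the term $|\mu|(\Omega)$ in the comparison estimate, which stays bounded because of the choice of $R_*$ in \eqref{R0.ini} (its first quantity is exactly $\rho^{n}[|\mu|(\Omega)/\rho^{n-1}]^{\kappa/(p-1)}\le1$). We then write $Du=Dw+(Du-Dw)$ and apply \eqref{comp.pre} twice, once on $B_\rho(y)$ (enlarged to $B_{2\rho}$ at the cost of a constant) and once on $B_{2\rho}(y)$. Abbreviating $M=|\mu|(B_{2\rho})/\rho^{n-1}$, this yields
\[
\Big(\mean{B_{\rho}}|Du|^s\Big)^{1/s}\le c\Big(\mean{B_{2\rho}}|Du|^{s}\Big)^{1/s}+cM^{1/(p-1)}+cM^{1/p'\cdot 0+1}\Big(\mean{B_{2\rho}}|Du|^s\Big)^{(2-p)/s}.
\]

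\textbf{Step 2: handling the extra $Du$ term.} This is the main obstacle, since the singular case produces the last term, which depends on $Du$. We use Young's inequality with exponents $1/(2-p)$ and $1/(p-1)$:
\[
M\,X^{2-p}\le \varepsilon X+c_\varepsilon M^{1/(p-1)},\qquad X=\Big(\mean{B_{2\rho}}|Du|^s\Big)^{1/s}.
\]
This reduces the bound to the form $\|Du\|_{L^s(B_\rho)}\lesssim \|Du\|_{L^s(B_{2\rho})}+M^{1/(p-1)}$. Such an estimate is not yet a reverse H\"older inequality, so the gain has to come from lowering the integrability exponent on the right-hand side.

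\textbf{Step 3: lowering the exponent to $\theta$.} To obtain $\theta$ on the right we go through $Dw$ instead: Corollary~\ref{revhol.cor} actually controls $\|Dw\|_{L^p}$ by $\|Dw\|_{L^\kappa}$, and by Lemma~\ref{lem:higher} or Lemma~\ref{lem.pq.phase} it can be lowered to any small exponent. Alternatively, we can interpolate $L^\kappa$ between $L^\theta$ and $L^{s}$, with $s>\kappa$ fixed via \eqref{kappa.range}. Applying Young's inequality gives, for concentric $\eta r\le r_1<r_2\le r$ and covering $B_{r_1}$ by balls of radius $(r_2-r_1)/4$,
\[
\|Du\|_{L^s(B_{r_1})}\le \tfrac12\|Du\|_{L^s(B_{r_2})}+\frac{c\,r^{\cdot}}{(r_2-r_1)^{\ell}}\Big(\mean{B_r}|Du|^\theta\Big)^{1/\theta}+c\Big[\frac{|\mu|(B_r)}{r^{n-1}}\Big]^{1/(p-1)}\frac{r^{\ell'}}{(r_2-r_1)^{\ell'}}.
\]
Here the measure term on small balls is bounded by $|\mu|(B_r)$ divided by a power of $(r_2-r_1)$. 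Lemma~\ref{tech.lemma} then removes the first term on the right and gives the claim, with constants depending on $\data$, $\|Du\|_{L^\kappa(\Omega)}$, $s$, $\theta$ and $\eta$. The delicate point we expect is keeping the constant from Corollary~\ref{revhol.cor} uniform across all small balls, which is where \eqref{R0.ini} is used.
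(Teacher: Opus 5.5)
Your argument is correct in substance, but it arranges the same ingredients differently from the paper.

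\textbf{The paper's route.} It fixes $\eta\le\sigma<\tau\le 1$ and compares $u$ once with a single solution $w_\tau$ on the whole ball $B_{\tau r}$. The lowering of the exponent is done at the level of $w_\tau$: Corollary \ref{revhol.cor} plus a covering gives \eqref{revhol.cover}, with $L^\theta$ on the right and the factor $(\tau-\sigma)^{-n(1/\theta-1/p)}$. The extra $Du$-term of \eqref{comp.pre} is then absorbed through \eqref{comp.young}, with $\varepsilon$ chosen depending on $\tau-\sigma$, and Lemma \ref{tech.lemma} closes the argument.

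\textbf{Your route.} You compare on every ball $B_{2\rho}(y)$, $\rho=(r_2-r_1)/4$, of a covering, and you use Corollary \ref{revhol.cor} only in its original $L^p$--$L^\kappa$ form. You lower the exponent for $u$ itself, by interpolating $\|Du\|_{L^\kappa}$ between $L^\theta$ and $L^s$. Because the covering has bounded overlap, your $\varepsilon$ is independent of $r_2-r_1$.

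\textbf{What each buys.} You pay with the preliminary restriction $s\ge\kappa$, which is harmless by H\"older's inequality. You also need a uniform bound on $\|Dw\|_{L^\kappa(B_{2\rho}(y))}$ over all small balls. This does follow from \eqref{R0.ini}, since $n-(n-1)\kappa/(p-1)>0$, whereas the paper needs that bound only once. In exchange, you make explicit the self-improvement of the reverse H\"older inequality that the paper compresses into ``a standard covering argument'' when it asserts \eqref{revhol.cover} for $\theta<\kappa$.

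\textbf{Two points to tighten.} First, the chain in your Step 1 bounds $\|Dw\|_{L^\kappa}$ by $\|Dw\|_{L^s}$ and hence by $c\|Du\|_{L^s}$, leaving a non-small coefficient. Replace it by
\[
\|Dw\|_{L^\kappa(B_{2\rho})}\le\|Du\|_{L^\kappa(B_{2\rho})}+\|Du-Dw\|_{L^s(B_{2\rho})}
\]
in averaged norms. Then every occurrence of $\|Du\|_{L^s}$ on the right carries a small factor: interpolation plus Young handles the first term, and Young handles $MX^{2-p}$ in the second.

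Second, of your two options in Step 3, only the interpolation one works as stated. Lemma \ref{lem.pq.phase} is an estimate for $H_{x_*}(|Dw|)$ and does not by itself yield a pure-power $L^p$--$L^\theta$ bound.
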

\begin{proof}
We fix $\eta \le \sigma < \tau \le 1$, and consider the homogeneous problem
\begin{equation*}
\left\{
\begin{aligned}
-\mathrm{div}\,A(x,Dw_\tau) &=0 &\text{in }& B_{\tau r}, \\
w_{\tau} &= u &\text{on }& \partial B_{\tau r}.
\end{aligned}
\right.
\end{equation*}
First, using \eqref{comp.pre} and Young's inequality, we have
\begin{equation}\label{comp.young}
\left(\mean{B_{\tau r}}|Du-Dw_{\tau}|^{s}\,dx\right)^{1/s} \le c\varepsilon^{\frac{p-2}{p-1}}\left[\frac{|\mu|(B_r)}{r^{n-1}}\right]^{1/(p-1)} + \varepsilon\left(\mean{B_{\tau r}}|Du|^{s}\,dx\right)^{1/s}
\end{equation}
for any $s$ satisfying \eqref{exp.range} and $\varepsilon \in (0,1]$, where $c\equiv c(n,p,\nu,L,s)$. 
We next recall reverse H\"older type estimates for $Dw_\tau$. Observe that, by using Corollary \ref{revhol.cor} and a standard covering argument, we can obtain
\begin{equation}\label{revhol.cover}
\left(\mean{B_{\sigma r}}|Dw_{\tau}|^{p}\,dx\right)^{1/p} \le \frac{c}{(\tau-\sigma)^{n(1/\theta - 1/p)}}\left(\mean{B_{\tau r}}|Dw_{\tau}|^{\theta}\,dx\right)^{1/\theta}
\end{equation}
for any $\theta \in (0,s]$, where $c\equiv c(\data, \|Dw_{\tau}\|_{L^{\kappa}(B_{\tau r})}, \theta,\eta)$ and $\kappa$ is given in \eqref{def.kappa}. 
We further estimate the quantity $\|Dw_{\tau}\|_{L^{\kappa}(B_{\tau r})}$. In light of \eqref{kappa.range}, we can use \eqref{comp.young} (with $\varepsilon=1$) and then \eqref{R0.ini} in order to get
\begin{equation*}
\begin{aligned}
\|Dw_{\tau}\|_{L^{\kappa}(B_{\tau r})} & \le \|Dw_{\tau}-Du\|_{L^{\kappa}(B_{\tau r})} + \|Du\|_{L^{\kappa}(B_{\tau r})} \\
& \le cr^{n/\kappa - (n-1)/(p-1)}[|\mu|(B_r)]^{1/(p-1)} + 2\|Du\|_{L^{\kappa}(B_{r})} \\
& \le c(\data, \|Du\|_{L^{\kappa}(\Omega)},\eta).
\end{aligned}
\end{equation*}
We now proceed as
\begin{equation*}
\begin{aligned}
& \left(\mean{B_{\sigma r}}|Du|^{s}\,dx\right)^{1/s} \le \left(\mean{B_{\sigma r}}|Du-Dw_{\tau}|^{s}\,dx\right)^{1/s} + \left(\mean{B_{\sigma r}}|Dw_{\tau}|^{s}\,dx\right)^{1/s} \\
\overset{\eqref{revhol.cover}}&{\le} \left(\mean{B_{\sigma r}}|Du-Dw_{\tau}|^{s}\,dx\right)^{1/s} + \frac{c}{(\tau-\sigma)^{n(1/\theta - 1/p)}}\left(\mean{B_{\tau r}}|Dw_{\tau}|^{\theta}\,dx\right)^{1/\theta} \\
& \le \left(\mean{B_{\sigma r}}|Du-Dw_{\tau}|^{s}\,dx\right)^{1/s} + \frac{c}{(\tau-\sigma)^{n(1/\theta - 1/p)}}\left(\mean{B_{\tau r}}|Du|^{\theta}\,dx + \mean{B_{\tau r}}|Du-Dw_{\tau}|^{\theta}\,dx\right)^{1/\theta} \\
& \le \frac{c}{(\tau-\sigma)^{n(1/\theta-1/p)}}\left[ \left(\mean{B_{\tau r}}|Du-Dw_{\tau}|^{s}\,dx\right)^{1/s} + \left(\mean{B_{r}}|Du|^{\theta}\,dx\right)^{1/\theta} \right] \\
\overset{\eqref{comp.young}}&{\le} \frac{c}{(\tau-\sigma)^{n(1/\theta - 1/p)}}\left\{ \varepsilon^{\frac{p-2}{p-1}}\left[\frac{|\mu|(B_r)}{r^{n-1}}\right]^{1/(p-1)} + \varepsilon\left(\mean{B_{\tau r}}|Du|^{s}\,dx\right)^{1/s} + \left(\mean{B_{r}}|Du|^{\theta}\,dx\right)^{1/\theta} \right\} 
\end{aligned}
\end{equation*}
for any $\varepsilon \in (0,1]$, where $c\equiv c(\data, \|Du\|_{L^{\kappa}(\Omega)}, s,\theta,\eta)$. 
Choosing $\varepsilon$ small enough, we arrive at
\begin{equation*}
\begin{aligned}
& \left(\mean{B_{\sigma r}}|Du|^{s}\,dx\right)^{1/s} \\
& \le \frac{1}{2}\left(\mean{B_{\tau r}}|Du|^{s}\,dx\right)^{1/s} + \frac{c}{(\tau-\sigma)^{\beta}}\left\{ \left(\mean{B_{r}}|Du|^{\theta}\,dx\right)^{1/\theta} + \left[\frac{|\mu|(B_r)}{r^{n-1}}\right]^{1/(p-1)} \right\}
\end{aligned}
\end{equation*}
whenever $\eta \le \sigma < \tau \le 1$, where $c\equiv c(\data, \|Du\|_{L^{\kappa}(\Omega)}, s,\theta,\eta)$ and $\beta \equiv \beta(n,p,\theta)$ are positive constants. 
Finally, an application of Lemma \ref{tech.lemma} leads to the conclusion.
\end{proof}

Using the above lemma (with $\eta = 2/3$ and $B_r$ replaced by $B_{3r}$) and H\"older's inequality, we can improve Lemma \ref{lem.comp.pre} as follows.
\begin{lemma}\label{improved.comp}
Let $u$ be the weak solution to \eqref{main.eq} under assumptions \eqref{growth}--\eqref{rate.sola}, and let $w$ be the weak solution to \eqref{homogeneous}. Then we have
\begin{equation*}
\mean{B_{2r}}|Du-Dw|^{s}\,dx \le c\left[\frac{|\mu|(B_{3r})}{(3r)^{n-1}}\right]^{s/(p-1)} + c\left[\frac{|\mu|(B_{3r})}{(3r)^{n-1}}\right]^{s}\left(\mean{B_{3r}}|Du|^{\theta}\,dx\right)^{(2-p)s/\theta}
\end{equation*}
for any 
\begin{equation*}
 s, \theta \in \left(0, \frac{n(p-1)}{n-1}\right),
\end{equation*}
where $c\equiv c(\data, \|Du\|_{L^{\kappa}(\Omega)},s,\theta)$.
\end{lemma}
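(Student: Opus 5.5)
The plan is to start from Lemma \ref{lem.comp.pre} with an auxiliary exponent $\tilde s$, replace its $Du$-term by Lemma \ref{lem.rev.u} on $B_{3r}$ with $\eta=2/3$, and then pass from $\tilde s$ to general $s$ by Hölder's inequality.

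\textbf{Step 1: choice of exponent and comparison.} Given $s,\theta\in(0,n(p-1)/(n-1))$, set $\tilde s\coloneqq\max\{s,\theta,1\}$, which satisfies \eqref{exp.range}. Apply \eqref{comp.pre} with $\tilde s$ to get
\[
\mean{B_{2r}}|Du-Dw|^{\tilde s}\,dx \le c\left[\frac{|\mu|(B_{2r})}{(2r)^{n-1}}\right]^{\tilde s/(p-1)} + c\left[\frac{|\mu|(B_{2r})}{(2r)^{n-1}}\right]^{\tilde s}\left(\mean{B_{2r}}|Du|^{\tilde s}\,dx\right)^{2-p}.
\]
Since $|\mu|(B_{2r})\le|\mu|(B_{3r})$ and the radii are comparable, we may replace $\frac{|\mu|(B_{2r})}{(2r)^{n-1}}$ by $c\,M\coloneqq c\,\frac{|\mu|(B_{3r})}{(3r)^{n-1}}$.

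\textbf{Step 2: reverse H\"older for $Du$.} Because $B_{4r}\Subset\Omega$ and $3r\le R_*$, Lemma \ref{lem.rev.u} applies on $B_{3r}$ with exponent $\tilde s$, with $\theta\le\tilde s$ and $\eta=2/3$. It gives
\[
\Big(\mean{B_{2r}}|Du|^{\tilde s}\,dx\Big)^{1/\tilde s}\le c\Big(\mean{B_{3r}}|Du|^{\theta}\,dx\Big)^{1/\theta}+cM^{1/(p-1)}.
\]
Raising to the power $(2-p)\tilde s$ and multiplying by $M^{\tilde s}$ bounds the second term by
\[
cM^{\tilde s}\Big(\mean{B_{3r}}|Du|^{\theta}\,dx\Big)^{(2-p)\tilde s/\theta}+cM^{\tilde s+(2-p)\tilde s/(p-1)}.
\]
The last exponent equals $\tilde s/(p-1)$, so this part is absorbed into the first term of the comparison estimate.

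\textbf{Step 3: pass to $s$.} Hölder's inequality gives $\mean{B_{2r}}|Du-Dw|^s\le(\mean{B_{2r}}|Du-Dw|^{\tilde s})^{s/\tilde s}$. Combined with $(a+b)^{s/\tilde s}\le c(a^{s/\tilde s}+b^{s/\tilde s})$, this yields exactly the two terms $M^{s/(p-1)}$ and $M^{s}(\mean{B_{3r}}|Du|^\theta)^{(2-p)s/\theta}$.

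The constants depend only on $\data$, $\|Du\|_{L^\kappa(\Omega)}$, $s$ and $\theta$ (through $\tilde s$). The only delicate points are checking that $\tilde s$ lies in the admissible range \eqref{exp.range} and that Lemma \ref{lem.rev.u} is applicable on $B_{3r}$; everything else is bookkeeping of exponents.
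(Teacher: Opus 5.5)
Your proposal is correct and follows the paper's own argument: the paper proves this lemma in one line by combining Lemma~\ref{lem.comp.pre} with Lemma~\ref{lem.rev.u} (with $\eta=2/3$ on $B_{3r}$) and H\"older's inequality. Your auxiliary exponent $\tilde s=\max\{s,\theta,1\}$ is exactly what makes both lemmas applicable for the full range of $s,\theta$, and your exponent bookkeeping, $\tilde s+(2-p)\tilde s/(p-1)=\tilde s/(p-1)$, is correct.
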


We now establish a comparison estimate involving the function $h$ given in \eqref{def.H}.

\begin{lemma}\label{lem.1st.h}
Let $u$ be the weak solution to \eqref{main.eq} under assumptions \eqref{growth}--\eqref{rate.sola}, and let $w$ be the weak solution to \eqref{homogeneous}. Then we have
\begin{equation}\label{1st.comparison.h}
\begin{aligned}
\mean{B_{2r}}h(x,|Du-Dw|)\,dx & \le c\left[\frac{|\mu|(B_{4r})}{(4r)^{n-1}}\right] + c\left[\frac{|\mu|(B_{4r})}{(4r)^{n-1}}\right]^{p-1}\left(\mean{B_{4r}}|Du|^{p-1}\,dx\right)^{2-p} \\
& \quad\; + c\chi_{\{q<2\}}\left[\frac{|\mu|(B_{4r})}{(4r)^{n-1}}\right]^{q-1}\left(\mean{B_{4r}}a(x)|Du|^{q-1}\,dx\right)^{2-q}
\end{aligned}
\end{equation}
for a constant $c\equiv c(\data, \|Du\|_{L^{\kappa}(\Omega)})$.
\end{lemma}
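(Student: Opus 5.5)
Write $h(x,|Du-Dw|) = |Du-Dw|^{p-1} + a(x)|Du-Dw|^{q-1}$ and treat the two pieces separately.

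\emph{The $p$-part.} Since $p-1 < n(p-1)/(n-1)$, we apply Lemma~\ref{improved.comp} with $s=\theta=p-1$. This gives
\[
\mean{B_{2r}}|Du-Dw|^{p-1}\,dx \le c\,\frac{|\mu|(B_{3r})}{(3r)^{n-1}} + c\left[\frac{|\mu|(B_{3r})}{(3r)^{n-1}}\right]^{p-1}\left(\mean{B_{3r}}|Du|^{p-1}\,dx\right)^{2-p}.
\]
We then enlarge $B_{3r}$ to $B_{4r}$. This costs only a dimensional constant, since $|\mu|(B_{3r})/(3r)^{n-1}\le c\,|\mu|(B_{4r})/(4r)^{n-1}$ and the averages are comparable.

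\emph{The $a$-part.} We split into the two phases, with $\bar L$ fixed.

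\textbf{$p$-phase.} Assume $\sup_{B_{4r}}a\le \bar L[a]_\alpha r^\alpha$. Then
\[
\mean{B_{2r}}a|Du-Dw|^{q-1}\,dx \le c\,r^\alpha\mean{B_{2r}}|Du-Dw|^{q-1}\,dx .
\]
By \eqref{q-1.admissible}, $q-1$ is an admissible exponent in Lemma~\ref{improved.comp}. Take $s=q-1$ there, and $\theta=p-1$ in the second term. This produces
\[
c\,r^\alpha\left[\frac{|\mu|}{r^{n-1}}\right]^{\frac{q-1}{p-1}} + c\,r^\alpha\left[\frac{|\mu|}{r^{n-1}}\right]^{q-1}\left(\mean{}|Du|^{p-1}\,dx\right)^{\frac{(2-p)(q-1)}{p-1}} .
\]
\begin{itemize}
\item For the first term, factor it as $\frac{|\mu|}{r^{n-1}}\cdot r^{\alpha}\bigl[\frac{|\mu|(\Omega)}{r^{n-1}}\bigr]^{\frac{q-p}{p-1}}$. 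The extra factor equals $r^{\alpha-(n-1)\frac{q-p}{p-1}}|\mu|(\Omega)^{\frac{q-p}{p-1}}\le1$ by \eqref{R0.ini}.
\item For the second term, split off $[\frac{|\mu|}{r^{n-1}}]^{p-1}(\mean{}|Du|^{p-1})^{2-p}$. What remains is $r^\alpha[\frac{|\mu|}{r^{n-1}}]^{q-p}(\mean{}|Du|^{p-1})^{\frac{(2-p)(q-p)}{p-1}}$. Bound the average by Hölder with $\gamma_0\ge p-1$, i.e.\ $(\mean{}|Du|^{p-1})^{1/(p-1)}\le c\,r^{-n/\gamma_0}\|Du\|_{L^{\gamma_0}}$. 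The resulting power of $r$ is exactly what $\sigma_0$ in \eqref{R0.ini} is designed to absorb. This is where $\gamma_0$ and $\sigma_0$ get defined, and it is also why no $\|Du\|_{L^{\gamma_0}}$ dependence is needed when $q\ge2$.
\end{itemize}
The exponent bookkeeping in this second term is the main obstacle.

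\textbf{$(p,q)$-phase.} Here $a(x)\approx a_0:=a(x_0)$ on $B_{4r}$.
\begin{itemize}
\item If $q\ge2$: $q$-growth is degenerate, so we use \eqref{weighted.comparison} with the $a|V_q|^2$ weight, following the degenerate argument of \cite{SY}. Since $|Du-Dw|^q\lesssim|V_q(Du)-V_q(Dw)|^2$, the standard truncation argument gives $a_0\mean{}|Du-Dw|^{q-1}\lesssim |\mu|/r^{n-1}$ up to the $p$-type terms.
\item If $q<2$: we redo the proof of Lemma~\ref{lem.comp.pre} for the $a|V_q|^2$ part. Apply \eqref{Vprop} with $s=q$ and weight $a$, test with the same $\varphi_\pm$, and run the standard argument of \cite[Lemma 4.1]{Ba17} with exponent $q-1$ and weight $a_0$. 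This yields
\[
a_0\mean{}|Du-Dw|^{q-1}\,dx \le c\,\frac{|\mu|}{r^{n-1}} + c\left[\frac{|\mu|}{r^{n-1}}\right]^{q-1}\left(\mean{}a_0|Du|^{q-1}\,dx\right)^{2-q},
\]
which is the $\chi_{\{q<2\}}$ term. This uses $a_0^{1/(q-1)}$-scaling to homogenize the estimate.
\end{itemize}
Finally, replace $a_0$ by $a(x)$ inside the averages using $a_0\le 2a(x)$, and sum the pieces.
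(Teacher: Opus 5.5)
Your argument is essentially the paper's proof. The $p$-part uses Lemma~\ref{improved.comp} with $s=\theta=p-1$, and you split $B_{4r}$ into a $p$-phase and a $(p,q)$-phase. In the $(p,q)$-phase you rescale $\mu\mapsto\mu/a_0$ in \eqref{weighted.comparison} and then run the standard degenerate ($q\ge2$) or singular ($q<2$) comparison argument. In the $p$-phase you apply Lemma~\ref{improved.comp} with $s=q-1$ and absorb the powers of $r$ via \eqref{R0.ini}.

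The one step that fails as written is in the second $p$-phase term. There you take $\theta=p-1$ and then pass to $\|Du\|_{L^{\gamma_0}}$ by H\"older, which requires $\gamma_0\ge p-1$. This is not automatic. With the paper's choice \eqref{def.gamma0}, $\gamma_0$ is the midpoint of $\bigl(\frac{n(2-p)}{\alpha/(q-p)-(n-1)},\frac{n(p-1)}{n-1}\bigr)$. As $q\downarrow p$ it tends to $\frac{n(p-1)}{2(n-1)}$, which is below $p-1$ when $n\ge 3$. For example, $n=3$, $p=1.8$, $q=1.81$, $\alpha=1$ satisfies \eqref{rate.sola} but gives $\gamma_0\approx 0.60<0.8=p-1$.

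There are two easy fixes.
\begin{itemize}
\item Build $\gamma_0\ge p-1$ into your own definition. This is harmless: $p-1<\frac{n(p-1)}{n-1}$, and enlarging $\gamma_0$ only increases $\sigma_0$.
\item Proceed as the paper does in \eqref{est.pp.1st}--\eqref{est.Du.pp}. Take $\theta=\gamma_0$ in Lemma~\ref{improved.comp} and bound the $(q-p)$-portion of the $\gamma_0$-average trivially by $r^{-n/\gamma_0}\|Du\|_{L^{\gamma_0}(\Omega)}$. Then bring the remaining $(p-1)$-portion back to an average of $|Du|^{p-1}$ over $B_{4r}$ using Lemma~\ref{lem.rev.u}, or H\"older if $\gamma_0<p-1$. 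The extra measure term this produces is of size $|\mu|(B_{4r})/(4r)^{n-1}$.
\end{itemize}
The paper's route works regardless of the order of $\gamma_0$ and $p-1$.

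Finally, your aside that this explains why no $\|Du\|_{L^{\gamma_0}}$ dependence is needed when $q\ge 2$ is not supported by your argument. The $\gamma_0$-term comes from the $(2-p)$-term of Lemma~\ref{improved.comp}, which is present for every $q$.
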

\begin{proof}
Lemma \ref{improved.comp} implies
\begin{equation}\label{compest.p-term}
\mean{B_{2r}}|Du-Dw|^{p-1}\,dx \le c\left[\frac{|\mu|(B_{3r})}{(3r)^{n-1}}\right] + c\left[\frac{|\mu|(B_{3r})}{(3r)^{n-1}}\right]^{p-1}\left(\mean{B_{3r}}|Du|^{p-1}\,dx\right)^{2-p}
\end{equation}
for some $c\equiv c(\data, \|Du\|_{L^{\kappa}(\Omega)})$. To proceed further, we consider the following two cases separately:
\begin{equation}\label{alt.sec4}
\inf_{x \in B_{4r}}a(x) > 8[a]_{\alpha}r^{\alpha} \qquad \text{and} \qquad \inf_{x \in B_{4r}}a(x) \le 8[a]_{\alpha}r^{\alpha}.
\end{equation}

\textit{The case $\eqref{alt.sec4}_1$. } 
In this case, observe from \eqref{a.holder} and $\eqref{alt.sec4}_1$ that
\begin{equation}\label{a.pqp}  
8[a]_{\alpha}r^{\alpha} < \inf_{x \in B_{4r}}a(x) \le \sup_{x \in B_{4r}}a(x) \le \inf_{x \in B_{4r}}a(x) + [a]_{\alpha}(8r)^{\alpha} \le 2\inf_{x \in B_{4r}}a(x). 
\end{equation}
Accordingly, letting $\tilde{\mu} \coloneqq \mu/\|a\|_{L^{\infty}(B_{4r})}$, \eqref{weighted.comparison} implies
\begin{equation*}
\int_{B_{2r}}\frac{|V_{q}(Du)-V_{q}(Dw)|^2}{(d+|u-w|)^{\xi}}\,dx \le c\frac{d^{1-\xi}}{\xi-1}|\tilde{\mu}|(B_{2r})
\end{equation*}
for some $c\equiv c(n,p,q,\nu,L)$. Then we can proceed similarly to the proofs of \cite[Lemma~2]{KM14BMS} (when $q \ge 2$) and \cite[Lemma 4.1]{Ba17} (when $2-1/n < q < 2$), thereby obtaining
\begin{equation*}
\begin{aligned}
& \mean{B_{2r}}|Du-Dw|^{q-1}\,dx \\
& \le c\left[\frac{|\tilde{\mu}|(B_{3r})}{(3r)^{n-1}}\right] + c\chi_{\{q<2\}}\left[\frac{|\tilde{\mu}|(B_{3r})}{(3r)^{n-1}}\right]^{q-1}\left(\mean{B_{3r}}|Du|^{q-1}\,dx\right)^{2-q} \\
& = c\|a\|_{L^{\infty}(B_{4r})}^{-1}\left[\frac{|\mu|(B_{3r})}{(3r)^{n-1}}\right] + c\chi_{\{q<2\}}\|a\|_{L^{\infty}(B_{4r})}^{1-q}\left[\frac{|\mu|(B_{3r})}{(3r)^{n-1}}\right]^{q-1}\left(\mean{B_{3r}}|Du|^{q-1}\,dx\right)^{2-q}
\end{aligned}
\end{equation*}
for some $c\equiv c(n,p,q,\nu,L)$; note that when $2-1/n < q < 2$, we have also used Lemma \ref{lem.rev.u} with $\theta = q-1$, $\eta = 2/3$ and $B_r$ replaced by $B_{3r}$. 
We thus obtain, using \eqref{a.pqp} twice,
\begin{equation*}
\begin{aligned}
& \mean{B_{2r}}a(x)|Du-Dw|^{q-1}\,dx \le \|a\|_{L^{\infty}(B_{4r})}\mean{B_{2r}}|Du-Dw|^{q-1}\,dx \\
& \le c\left[\frac{|\mu|(B_{3r})}{(3r)^{n-1}}\right] + c\chi_{\{q<2\}}\|a\|_{L^{\infty}(B_{4r})}^{2-q}\left[\frac{|\mu|(B_{3r})}{(3r)^{n-1}}\right]^{q-1}\left(\mean{B_{3r}}|Du|^{q-1}\,dx\right)^{2-q} \\
& \le c\left[\frac{|\mu|(B_{3r})}{(3r)^{n-1}}\right] + c\chi_{\{q<2\}}\left[\frac{|\mu|(B_{3r})}{(3r)^{n-1}}\right]^{q-1}\left(\mean{B_{3r}}a(x)|Du|^{q-1}\,dx\right)^{2-q}
\end{aligned}
\end{equation*}
for some $c\equiv c(\data, \|Du\|_{L^{\kappa}(\Omega)})$.
Combining this estimate and \eqref{compest.p-term}, we obtain \eqref{1st.comparison.h}.

\textit{The case $\eqref{alt.sec4}_2$. } 
In this case,  \eqref{a.holder} and $\eqref{alt.sec4}_2$ imply
\begin{equation}\label{a.pp} 
\sup_{x \in B_{4r}}a(x) \le \inf_{x \in B_{4r}}a(x) + [a]_{\alpha}(8r)^{\alpha} \le 16[a]_{\alpha}r^{\alpha}. 
\end{equation}
We set the exponent
\begin{equation}\label{def.gamma0}
\gamma_{0} \coloneqq \frac{1}{2}\left(\frac{n(2-p)}{\alpha/(q-p) -(n-1)} + \frac{n(p-1)}{n-1}\right). 
\end{equation}
Observe that
\begin{equation}\label{gamma0.range}
\eqref{rate.sola} \;\; \Longrightarrow \;\; \frac{n(2-p)}{\alpha/(q-p)-(n-1)} < \gamma_{0} <\frac{n(p-1)}{n-1}.
\end{equation}
This and \eqref{q-1.admissible} allow us to use Lemma \ref{improved.comp} with $s = q-1$ and $\theta = \gamma_0$, which gives
\begin{equation}\label{est.pp.1st}
\begin{aligned}
& \mean{B_{2r}}a(x)|Du-Dw|^{q-1} \overset{\eqref{a.pp}}{\le} cr^{\alpha}\mean{B_{2r}}|Du-Dw|^{q-1}\,dx \\
& \le cr^{\alpha}\left[\frac{|\mu|(B_{3r})}{(3r)^{n-1}}\right]^{\frac{q-1}{p-1}} + cr^{\alpha}\left[\frac{|\mu|(B_{3r})}{(3r)^{n-1}}\right]^{q-1}\left(\mean{B_{3r}}|Du|^{\gamma_0}\,dx\right)^{(2-p)(q-1)/\gamma_0}
\end{aligned}
\end{equation}
for a constant $c\equiv c(\data, \|Du\|_{L^{\kappa}(\Omega)})$. We further estimate each term in the right-hand side as
\begin{equation}\label{est.measure.pp}
\begin{aligned}
r^{\alpha}\left[\frac{|\mu|(B_{3r})}{(3r)^{n-1}}\right]^{\frac{q-1}{p-1}} & = r^{\alpha}\left[\frac{|\mu|(B_{3r})}{(3r)^{n-1}}\right]^{\frac{q-p}{p-1} + 1} \\
& \le cr^{\alpha - \frac{q-p}{p-1}(n-1)}[|\mu|(\Omega)]^{\frac{q-p}{p-1}}\left[\frac{|\mu|(B_{3r})}{(3r)^{n-1}}\right] \\
\overset{\eqref{R0.ini}}&{\le} c\left[\frac{|\mu|(B_{3r})}{(3r)^{n-1}}\right]
\end{aligned}
\end{equation}
and
\begin{equation*}
\begin{aligned}
& r^{\alpha}\left[\frac{|\mu|(B_{3r})}{(3r)^{n-1}}\right]^{q-1}\left(\mean{B_{3r}}|Du|^{\gamma_0}\,dx\right)^{(2-p)(q-1)/\gamma_0} \\
& = r^{\alpha}\left[\frac{|\mu|(B_{3r})}{(3r)^{n-1}}\right]^{q-p + p-1}\left(\mean{B_{3r}}|Du|^{\gamma_0}\,dx\right)^{(2-p)(q-p)/\gamma_0 + (2-p)(p-1)/\gamma_0} \\
& \le cr^{\sigma_0}[|\mu|(\Omega)]^{q-p}\|Du\|_{L^{\gamma_0}(\Omega)}^{(2-p)(q-p)}\left[\frac{|\mu|(B_{3r})}{(3r)^{n-1}}\right]^{p-1}\left(\mean{B_{3r}}|Du|^{\gamma_0}\,dx\right)^{(2-p)(p-1)/\gamma_0},
\end{aligned}
\end{equation*}
where $c\equiv c(\data, \|Du\|_{L^{\kappa}(\Omega)})$ and
\begin{equation}\label{def.sigma0}
\sigma_0 \coloneqq \alpha-(q-p)(n-1)-\frac{n(2-p)(q-p)}{\gamma_0} > 0 \quad \text{by}\;\; \eqref{gamma0.range}. 
\end{equation}
In light of \eqref{R0.ini}, we estimate
\begin{equation}\label{est.Du.pp}
\begin{aligned}
& r^{\alpha}\left[\frac{|\mu|(B_{3r})}{(3r)^{n-1}}\right]^{q-1}\left(\mean{B_{3r}}|Du|^{\gamma_0}\,dx\right)^{(2-p)(q-1)/\gamma_0} \\
& \le c\left[\frac{|\mu|(B_{3r})}{(3r)^{n-1}}\right]^{p-1}\left(\mean{B_{3r}}|Du|^{\gamma_0}\,dx\right)^{(2-p)(p-1)/\gamma_0} \\
& \le c\left[\frac{|\mu|(B_{4r})}{(4r)^{n-1}}\right] + c\left[\frac{|\mu|(B_{4r})}{(4r)^{n-1}}\right]^{p-1}\left(\mean{B_{4r}}|Du|^{p-1}\,dx\right)^{2-p}
\end{aligned}
\end{equation}
for some $c\equiv c(\data,\|Du\|_{L^{\kappa}(\Omega)})$, where in the last line we also have used Lemma \ref{lem.rev.u} (with $\theta = p-1$, $\eta = 3/4$ and $B_r$ replaced by $B_{4r}$). 
Connecting \eqref{est.measure.pp} and \eqref{est.Du.pp} to \eqref{est.pp.1st}, we arrive at
\begin{equation*}
\mean{B_{2r}}a(x)|Du-Dw|^{q-1}\,dx \le c\left[\frac{|\mu|(B_{4r})}{(4r)^{n-1}}\right] + c\left[\frac{|\mu|(B_{4r})}{(4r)^{n-1}}\right]^{p-1}\left(\mean{B_{4r}}|Du|^{p-1}\,dx\right)^{2-p},
\end{equation*}
which together with \eqref{compest.p-term} leads to \eqref{1st.comparison.h}.
\end{proof}

\section{Proof of the main theorem}\label{sec4}

Now we consider the general case when $\mu$ is a signed Borel measure with finite total mass.
We start this section with recalling the definition of $\mathbf{M}_1(\mu)$  in \eqref{def.M1}; observe that there exists a constant  $c_{0} \equiv c_{0}(n) \ge 1$ such that
\begin{equation}\label{ineq.M1}
\frac{|\mu|(\overline{B_{r}(x_{0})})}{r^{n-1}}  \le c_{0}\mean{B_{r}(x_{0})}\left[\frac{|\mu|(B_{2r}(x))}{(2r)^{n-1}}\right]\,dx 
 \le c_{0}\mean{B_{r}(x_{0})}\mathbf{M}_{1}(\mu)\,dx
\end{equation}
holds whenever $B_{r}(x_{0}) \subset \mathbb{R}^{n}$ is a ball.

We now prove our main theorem.

\subsection{Proof of Theorem~\ref{main.thm}}
The proof is divided into eleven steps.

\textit{Step 1: Exit time and covering arguments. } 
We take a ball $B_{2R}  \Subset \Omega$ with $2R \le R_0$ as in the statement; we initially assume that $R_0 \le R_*$ for the radius $R_*$ chosen in \eqref{R0.ini}. 
The value of $R_0$ will be determined later in the proof. 

For any ball $B_{\varrho}(x_0) \subset B_{2R}$ and $\lambda>0$, we consider the upper level sets
\begin{equation*}
E(B_{\varrho}(x_0),\lambda) \coloneqq \{x \in B_{\varrho}(x_0): h(x,|Du(x)|)>\lambda\}
\end{equation*}
and
\[ \mathcal{E}(B_{\varrho}(x_0),\lambda) \coloneqq \{x \in B_{\varrho}(x_0): \mathbf{M}_{1}(\mu)(x) > \lambda \}.  \]
With $M \ge 1$ being a free parameter, whose value will be eventually determined in the end of the proof, we define
\begin{equation*}
\Psi(B_{\varrho}(x_{0})) \coloneqq \mean{B_{\varrho}(x_{0})}\left[h(x,|Du|)+M\mathbf{M}_{1}(\mu)\right]\,dx.
\end{equation*}
again for any ball $B_{\varrho}(x_0) \subset B_{2R}$. It is readily seen that
\begin{equation}\label{psi.limit}
\lim_{\varrho\searrow0}\Psi(B_{\varrho}(x_{0}))> \lambda \qquad \text{for a.e.} \;\; x_{0} \in E(B_s,\lambda) \;\; \text{and} \;\; R \le s \le 2R.
\end{equation}
We choose two radii $r_1,r_2$ such that $R \le r_1 < r_2 \le 2R$. Observe that if $x_{0} \in B_{r_{1}}$, then for any $\varrho \in [(r_{2}-r_{1})/20, r_{2}-r_{1}]$ we have
\begin{equation}\label{def.lambda0}
\Psi(B_{\varrho}(x_{0})) \le \frac{20^{n}r_{2}^{n}}{(r_{2}-r_{1})^{n}}\mean{B_{r_{2}}}\left[h(x,|Du|) + M\mathbf{M}_{1}(\mu)\right]\,dx \eqqcolon \lambda_{0}.
\end{equation}

From now on, we consider 
\begin{equation}\label{lambda.range}
\lambda > \lambda_0. 
\end{equation}
In light of \eqref{psi.limit} and \eqref{def.lambda0}, there exists an exit time radius $\varrho_{x_{0}} \in (0, (r_{2}-r_{1})/20)$ satisfying
\begin{equation*}
\Psi(B_{\varrho_{x_{0}}}(x_{0})) = \lambda \qquad \text{and} \qquad \Psi(B_{\varrho}(x_{0})) < \lambda \quad \text{for every } \varrho \in (\varrho_{x_{0}},r_{2}-r_{1}].
\end{equation*} 
Since this is valid for a.e. $x_0 \in E(B_{r_1},\lambda)$, 
the family $\{B_{\varrho_{x_{0}}}(x_{0}) : x_{0} \in E(B_{r_{1}},\lambda)\}$ covers $E(B_{r_1},\lambda)$ up to a negligible set. Thus, Vitali's covering lemma implies that there exists a countable family $\{B_{\varrho_{x_i}}(x_i)\} \equiv \{\tilde{B}_{i}\}$ of mutually disjoint balls satisfying
\begin{equation}\label{covering}
E(B_{r_1} ,\lambda) \subset \bigcup_{i}5\tilde{B}_{i} \cup \text{negligible set} 
\end{equation}
and, whenever $i \in \mathbb{N}$,
\begin{equation}\label{exit.radius}
\Psi(B_{\varrho_{x_i}}(x_i)) = \lambda \qquad \text{and} \qquad \Psi(B_{\varrho}(x_i))<\lambda \quad \text{for every}\;\; \varrho \in (\varrho_{x_i}, r_{2}-r_{1}].
\end{equation}
In the following, we simply denote
\begin{equation}\label{def.Bi} 
B_{i} \coloneqq 5B_{\varrho_{x_i}}(x_i) = 5\tilde{B}_{i} \qquad \text{and} \qquad \varrho_i \coloneqq 5\varrho_{x_i}. 
\end{equation}
Observe that
\begin{equation*}
\varrho_{i} = 5\varrho_{x_i} \le \frac{r_{2}-r_{1}}{4} \le R_0 \le 1
\end{equation*}
and that, by construction, 
\begin{equation*}
40\tilde{B}_{i} = 8B_{i} \subset B_{r_2}.
\end{equation*}
We also note that \eqref{exit.radius} implies
\begin{equation}\label{exit2}
\left\{
\begin{aligned}
 \Psi(\tilde{B}_{i}) & = \mean{\tilde{B}_{i}}\left[ h(x,|Du|) + M\mathbf{M}_{1}(\mu)\right]\,dx = \lambda, \\
 \Psi(8B_{i}) & = \mean{8B_{i}}\left[ h(x,|Du|) + M\mathbf{M}_{1}(\mu)\right]\,dx \le \lambda.
\end{aligned}
\right.
\end{equation}

\textit{Step 2: Approximation and a first comparison function. }
With $B_i$ being any fixed ball considered in the above, we choose a sequence of functions $\{\mu_j\} \subset L^{\infty}(\Omega)$ such that $\mu_{j}\rightharpoonup \mu$ weakly* in the sense of measures and \eqref{muj.conv} is in force; note that such a sequence can be constructed via mollification. Accordingly, we choose a sequence of corresponding weak solutions $\{u_{j}\} \subset W^{1,H}_{0}(\Omega)$ to \eqref{approx.prob} such that $u_j \to u$ in $W^{1,s}_{0}(\Omega)$ for any $s$ satisfying \eqref{exp.range}. 
In particular, we can choose $j \in \mathbb{N}$ large enough to satisfy
\begin{equation}\label{approx.Du.2}
\|Du_j \|_{L^{\kappa}(\Omega)}  \le 2 \|Du \|_{L^{\kappa}(\Omega)},
\end{equation}
\begin{equation}\label{approx.Du}
\mean{8B_i}h(x,|Du_{j}-Du|)\,dx \le  \frac{\lambda_0}{M}
\end{equation}
and
\begin{equation}\label{approx.mu}
|\mu_j|(8B_i) \le 2|\mu|(\overline{8B_i}).
\end{equation}
Here, $\kappa$ is the constant  defined in \eqref{def.kappa}.
We next consider the weak solution $w_{i,j} \in u_{j} + W^{1,H}_{0}(4B_i)$ to the Dirichlet problem
\begin{equation*}
\left\{
\begin{aligned}
-\mathrm{div}\, A(x,Dw_{i,j})& = 0&\text{in }& 4B_{i}, \\
w_{i,j} & = u_{j}& \text{on }& \partial (4B_i).
\end{aligned}
\right.
\end{equation*}
Lemma \ref{lem.1st.h} and Young's inequality imply
\begin{equation}\label{comp1}
\begin{aligned}
\mean{4B_{i}}h(x,|Du_{j}-Dw_{i,j}|)\,dx & \le c\left[\frac{|\mu|(\overline{8B_{i}})}{(8\varrho_i)^{n-1}}\right] + c\left[\frac{|\mu|(\overline{8B_{i}})}{(8\varrho_i)^{n-1}}\right]^{p-1}\left(\mean{8B_{i}}|Du_j|^{p-1}\,dx\right)^{2-p} \\
& \quad\; + c\chi_{\{q<2\}}\left[\frac{|\mu|(\overline{8B_{i}})}{(8\varrho_i)^{n-1}}\right]^{q-1}\left(\mean{8B_{i}}a(x)|Du_j|^{q-1}\,dx\right)^{2-q} \\
&\le c\varepsilon_{0}^{\frac{p-2}{p-1}}\left[\frac{|\mu|(\overline{8B_{i}})}{(8\varrho_i)^{n-1}}\right] 
+ \varepsilon_{0}\mean{8B_i}h(x,|Du_j|)\,dx
\end{aligned}
\end{equation}
for a constant $c\equiv c(\data, \|Du\|_{L^{\kappa}(\Omega)})$, whenever $\varepsilon_0 \in (0,1]$. Note that we also have used \eqref{approx.Du.2} and \eqref{approx.mu}.

\textit{Step 3: A second comparison function. } 
With the same ball $4B_{i}$ as in the previous step, we choose a point $\tilde{x}_{i} \in \overline{2B_{i}}$ satisfying
\[ a(\tilde{x}_{i}) =  \sup_{x \in 2B_{i}}a(x). \]
We then consider the following homogeneous frozen problem: 
\begin{equation}\label{def.v}
\left\{
\begin{aligned}
-\mathrm{div}\, A(\tilde{x}_{i},Dv_{i,j}) &= 0 &\textrm{in }& 2B_{i}, \\
v_{i,j} &= w_{i,j} &\textrm{on }& \partial (2B_{i}).
\end{aligned}
\right.
\end{equation}
Since $w_{i,j} \in W^{1,q}(2B_i)$ by Lemma~\ref{lem:higher}, 
the existence of a unique weak solution $v_{i,j}$ to \eqref{def.v} follows from standard monotonicity methods in Orlicz spaces. Moreover, $v_{i,j}$ satisfies the energy estimate
\begin{equation} \label{v.energy}
\mean{2B_{i}}(|Dv_{i,j}|^{p}+a(\tilde{x}_{i})|Dv_{i,j}|^{q})\,dx \le c\mean{2B_{i}}(|Dw_{i,j}|^{p}+a(\tilde{x}_{i})|Dw_{i,j}|^{q})\,dx 
\end{equation}
for some $c\equiv c(n,p,q,\nu,L)$.

Proceeding exactly as in the proof of \cite[(4.26)]{CM16JFA}, we obtain
\begin{equation}\label{comp.I}
\begin{aligned}
& \mean{2B_i}\left(|V_{p}(Dw_{i,j})-V_{p}(Dv_{i,j})|^{2}+a(\tilde{x}_{i})|V_{q}(Dw_{i,j})-V_{q}(Dv_{i,j})|^{2}\right)\,dx  \\
& \le c\left(\osc_{2B_i}a\right)\mean{2B_i}|Dw_{i,j}|^{q-1}|Dv_{i,j}-Dw_{i,j}|\,dx \eqqcolon I
\end{aligned}
\end{equation}
for some $c\equiv c(n,p,q,\nu,L)$.

\textit{Step 4: Two different phases. } 
In order to estimate $I$, we divide the cases as follows:
\begin{equation}\label{alt.comp2}
\inf_{x\in 2B_i}a(x) > K[a]_{\alpha}\varrho_{i}^{\alpha} \qquad \text{and} \qquad \inf_{x \in 2B_i}a(x) \le K[a]_{\alpha}\varrho_{i}^{\alpha},
\end{equation}
where $K \ge 4$ is a free parameter whose value will be determined later in the proof.

\textit{Step 5: Estimates in the $(p,q)$-phase. } Here we consider the case $\eqref{alt.comp2}_{1}$. 
In this case, we have 
\[ \osc_{2B_{i}}a \le [a]_{\alpha}(4\varrho_i)^{\alpha} \le \frac{4}{K}a(x) \] 
and 
\begin{equation}\label{a.equiv}
a(\tilde{x}_{i}) \le a(x) + [a]_{\alpha}(2\varrho_i)^{\alpha} \le a(x) + \frac{4}{K}a(x) \le 2a(x)
\end{equation}
 for any $x\in 2B_{i}$. Using these, we estimate $I$ as
\begin{equation*}
\begin{aligned}
I & \le \frac{c}{K}\mean{2B_{i}}a(x)|Dw_{i,j}|^{q-1}|Dv_{i,j}-Dw_{i,j}|\,dx \\
& \le \frac{c}{K}\mean{2B_{i}}a(x)(|Dv_{i,j}|+|Dw_{i,j}|)^{q}\,dx 
\overset{\eqref{v.energy}} {\le} \frac{c}{K}\mean{2B_{i}}(|Dw_{i,j}|^{p}+a(\tilde{x}_{i})|Dw_{i,j}|^{q})\,dx.
\end{aligned}
\end{equation*}
Plugging this inequality into \eqref{comp.I} yields
\begin{equation}\label{pq.phase.I}
\begin{aligned}
& \mean{2B_i}\left(|V_{p}(Dw_{i,j})-V_{p}(Dv_{i,j})|^{2}+a(\tilde{x}_{i})|V_{q}(Dw_{i,j})-V_{q}(Dv_{i,j})|^{2}\right)\,dx  \\
& \le \frac{c}{K}\mean{2B_{i}}(|Dw_{i,j}|^{p}+a(\tilde{x}_{i})|Dw_{i,j}|^{q})\,dx 
\end{aligned}
\end{equation}
for some $c\equiv c(n,p,q,\nu,L)$; we accordingly get
\begin{equation*}
\begin{aligned}
& \mean{2B_i}|Dw_{i,j}-Dv_{i,j}|^{p}\,dx \overset{\eqref{VV}}{\le} c\mean{2B_i}|V_{p}(Dw_{i,j}) - V_{p}(Dv_{i,j})|^{p}(|Dw_{i,j}|+|Dv_{i,j}|)^{p(2-p)/2}\,dx \\
& \le c\left(\mean{2B_i}|V_{p}(Dw_{i,j})-V_{p}(Dv_{i,j})|^{2}\,dx\right)^{p/2}\left(\mean{2B_i}(|Dw_{i,j}|+|Dv_{i,j}|)^{p}\,dx\right)^{(2-p)/2} \\
\overset{\eqref{v.energy},\eqref{pq.phase.I}}&{\le} \frac{c}{K^{p/2}}\mean{2B_i}(|Dw_{i,j}|^{p}+a(\tilde{x}_{i})|Dw_{i,j}|^{q})\,dx.
\end{aligned}
\end{equation*}
When $q<2$, a completely similar calculation also gives
\begin{equation*}
\mean{2B_i}a(\tilde{x}_{i})|Dw_{i,j}-Dv_{i,j}|^{q}\,dx \le \frac{c}{K^{q/2}}\mean{2B_i}(|Dw_{i,j}|^{p}+a(\tilde{x}_{i})|Dw_{i,j}|^{q})\,dx.
\end{equation*}
When $q \ge 2$,  we directly have
\begin{equation*}
\begin{aligned}
\mean{2B_i}a(\tilde{x}_{i})|Dw_{i,j}-Dv_{i,j}|^{q}\,dx \overset{\eqref{Vprop}}&{\le} c\mean{2B_i}a(\tilde{x}_{i})|V_{q}(Dw_{i,j})-V_{q}(Dv_{i,j})|^{2}\,dx \\
\overset{\eqref{pq.phase.I}}&{\le} \frac{c}{K}(|Dw_{i,j}|^{p}+a(\tilde{x}_{i})|Dw_{i,j}|^{q})\,dx.
\end{aligned}
\end{equation*}
Combining the above three displays, and recalling $K \ge 4$, we arrive at
\begin{equation}\label{comp.pq.energy}
\mean{2B_i}H_{\tilde{x}_{i}}(|Dw_{i,j}-Dv_{i,j}|)\,dx \le \frac{c}{K^{p/2}}\mean{2B_i}H_{\tilde{x}_{i}}(|Dw_{i,j}|)\,dx
\end{equation}
for some $c\equiv c(n,p,q,\nu,L)$, where we have used the notations introduced in \eqref{def.Hx}. 
Observe that both $H_{\tilde{x}_{i}}$ and $H_{\tilde{x}_{i}} \circ h_{\tilde{x}_{i}}^{-1}$ are convex. 
We also note that, due to \eqref{a.equiv}, we can apply Lemma \ref{lem.pq.phase} to $w_{i,j}$, which gives
\[ \mean{2B_i}H_{\tilde{x}_{i}}(|Dw_{i,j}|)\,dx \le cH_{\tilde{x}_{i}}\left(\mean{4B_i}|Dw_{i,j}|\,dx\right)  \]
for some $c\equiv c(n,p,q,\nu,L)$. 
Applying this and Jensen's inequality to \eqref{comp.pq.energy}, we obtain
\begin{equation}\label{conclusion.pq}
\begin{aligned}
\mean{2B_{i}}h_{\tilde{x}_{i}}(|Dw_{i,j}-Dv_{i,j}|)\,dx & \le (h_{\tilde{x}_{i}}\circ H_{\tilde{x}_{i}}^{-1})\left(\mean{2B_{i}}H_{\tilde{x}_{i}}(|Dw_{i,j}-Dv_{i,j}|)\,dx\right) \\
& \le c(h_{\tilde{x}_{i}}\circ H_{\tilde{x}_{i}}^{-1})\left(\frac{1}{K^{p/2}}\mean{2B_{i}}H_{\tilde{x}_{i}}(|Dw_{i,j}|)\,dx\right) \\
& \le \frac{c}{K^{p(p-1)/(2q)}}\mean{4B_{i}}h_{\tilde{x}_{i}}(|Dw_{i,j}|)\,dx \\
\overset{\eqref{a.equiv}}&{\le} \frac{\tilde{c}}{K^{p(p-1)/(2q)}}\mean{4B_i}h(x,|Dw_{i,j}|)\,dx
\end{aligned}
\end{equation}
for some $\tilde{c}\equiv \tilde{c}(n,p,q,\nu,L)$.

\textit{Step 6: Estimates in the $p$-phase. } We next assume $\eqref{alt.comp2}_{2}$, which along with \eqref{a.holder} implies
\begin{equation}\label{p.phase.am}
a(\tilde{x}_{i}) \le \inf_{x \in 2B_i}a(x) + [a]_{\alpha}(4\varrho_i)^{\alpha} \le (4+K)[a]_{\alpha}\varrho_i^{\alpha}.
\end{equation} 
We start with applying the obvious inequality $\osc_{2B_i}a \le a(\tilde{x}_{i})$ to the right-hand side of \eqref{comp.I}, which gives
\begin{equation}\label{p.phase.I}
\begin{aligned}
& \mean{2B_i}\left(|V_{p}(Dw_{i,j})-V_{p}(Dv_{i,j})|^{2}+a(\tilde{x}_{i})|V_{q}(Dw_{i,j})-V_{q}(Dv_{i,j})|^{2}\right)\,dx \\
& \le c\mean{2B_i}a(\tilde{x}_{i})|Dw_{i,j}|^{q-1}|Dv_{i,j}-Dw_{i,j}|\,dx.
\end{aligned}
\end{equation}
We then proceed as
\begin{equation*}
\begin{aligned}
& \mean{2B_i}|Dw_{i,j}-Dv_{i,j}|^{p}\,dx \\
\overset{\eqref{Vprop}}&{\le} c_{\varepsilon}\mean{2B_i}|V_{p}(Dw_{i,j})-V_{p}(Dv_{i,j})|^{2}\,dx + \varepsilon^{p/(p-1)}\mean{2B_i}|Dw_{i,j}|^{p}\,dx \\
\overset{\eqref{p.phase.I}}&{\le} c_{\varepsilon}\mean{2B_i}a(\tilde{x}_{i})|Dw_{i,j}|^{q-1}|Dw_{i,j}-Dv_{i,j}|\,dx + \varepsilon^{p/(p-1)}\mean{2B_i}|Dw_{i,j}|^{p}\,dx \\
\overset{\text{Young}}&{\le} c_{\varepsilon}c_{\tilde{\varepsilon}}\mean{2B_i}\left(a(\tilde{x}_{i})|Dw_{i,j}|^{q-1}\right)^{p/(p-1)}\,dx + c_{\varepsilon}\tilde{\varepsilon}\mean{2B_i}|Dw_{i,j}-Dv_{i,j}|^{p}\,dx \\
& \qquad + \varepsilon^{p/(p-1)}\mean{2B_i}|Dw_{i,j}|^{p}\,dx
\end{aligned}
\end{equation*}
whenever $\varepsilon, \tilde{\varepsilon} \in (0,1)$, where $c_{\varepsilon} \equiv c_{\varepsilon}(n,p,q,\nu,L,\varepsilon)$ and $c_{\tilde{\varepsilon}} \equiv c_{\tilde{\varepsilon}}(n,p,q,\nu,L,\tilde{\varepsilon})$. Choosing $\tilde{\varepsilon} = 1/(2c_{\varepsilon})$, reabsorbing terms and then applying H\"older's inequality, we arrive at
\begin{equation}\label{p.phase.p-1.start}
\begin{aligned}
& \mean{2B_i}|Dw_{i,j}-Dv_{i,j}|^{p-1}\,dx \le \left(\mean{2B_i}|Dw_{i,j}-Dv_{i,j}|^{p}\,dx\right)^{(p-1)/p} \\
& \le c_{\varepsilon}a(\tilde{x}_{i})\left(\mean{2B_i}|Dw_{i,j}|^{\frac{(q-1)p}{p-1}}\,dx \right)^{(p-1)/p} + 2\varepsilon\left(\mean{2B_i}|Dw_{i,j}|^{p}\,dx\right)^{(p-1)/p} 
\end{aligned}
\end{equation}
for some $c_{\varepsilon} \equiv c_{\varepsilon}(n,p,q,\nu,L,\varepsilon)$. 
We now observe that
\begin{equation*}
\eqref{rate.sola}, \, \alpha \le 1 \;\; \Longrightarrow \;\; \frac{(q-1)p}{p-1} < \frac{(n-1+\alpha)p}{n-1} < \frac{np}{n-2\alpha},
\end{equation*}
with the understanding that $np/(n-2\alpha) = \infty$ if $n=2$ and $\alpha=1$. This fact together with \eqref{p.phase.am} enables us to apply Lemma \ref{lem:higher} with the choice $\tilde{q} = (q-1)p/(p-1)$, which gives
\begin{equation*}
\begin{aligned}
& a(\tilde{x}_{i})\left(\mean{2B_i}|Dw_{i,j}|^{\frac{(q-1)p}{p-1}}\,dx\right)^{(p-1)/p} \\
\overset{\eqref{p.phase.am},\eqref{higher.est}}&{\le} c\varrho_{i}^{\alpha}\left(\mean{3B_i}|Dw_{i,j}|^{\kappa}\,dx\right)^{(q-1)/\kappa} \\
& = c\varrho_{i}^{\alpha}\left(\mean{3B_i}|Dw_{i,j}|^{\kappa}\,dx\right)^{(q-p)/\kappa + (p-1)/\kappa} \\
& \le c\varrho_{i}^{\alpha-n(q-p)/\kappa}\|Dw_{i,j}\|_{L^{\kappa}(3B_i)}^{q-p}\left(\mean{3B_i}|Dw_{i,j}|^{\kappa}\,dx\right)^{(p-1)/\kappa} \\
\overset{\eqref{higher.est}}&{\le} c\varrho_{i}^{\alpha-n(q-p)/\kappa}\|Dw_{i,j}\|_{L^{\kappa}(4B_i)}^{q-p}\mean{4B_i}|Dw_{i,j}|^{p-1}\,dx
\end{aligned}
\end{equation*}
for some $c \equiv c(\data, \|Dw_{i,j}\|_{L^{\kappa}(4B_i)},K)$. 
We also have
\begin{equation*}
\left(\mean{2B_i}|Dw_{i,j}|^{p}\,dx\right)^{(p-1)/p} \le \bar{c}\mean{4B_i}|Dw_{i,j}|^{p-1}\,dx
\end{equation*}
for some $\bar{c}\equiv \bar{c}(\data, \|Dw_{i,j}\|_{L^{\kappa}(4B_i)},K)$. 
Connecting the above two displays to \eqref{p.phase.p-1.start}, 
we get
\begin{equation*}
\begin{aligned}
\mean{2B_i}|Dw_{i,j}-Dv_{i,j}|^{p-1}\,dx \le \left[ c_{\varepsilon}\varrho_{i}^{\alpha-n(q-p)/\kappa}\|Dw_{i,j}\|_{L^{\kappa}(4B_i)}^{q-p} + 2\bar{c}\varepsilon \right]\mean{4B_i}|Dw_{i,j}|^{p-1}\,dx
\end{aligned}
\end{equation*}
for any $\varepsilon \in (0,1)$, where $c_{\varepsilon} \equiv c_{\varepsilon}(\data, \|Dw_{i,j}\|_{L^{\kappa}(4B_i)},K,\varepsilon)$. 
In this inequality, we choose $\varepsilon = \tilde{c}/(2\bar{c}K^{p(p-1)/(2q)})$ for the constant $\tilde{c} \equiv \tilde{c}(n,p,q,\nu,L)$ given in \eqref{conclusion.pq}; this gives
\begin{equation}\label{p.phase.p-1}
\begin{aligned}
& \mean{2B_i}|Dw_{i,j}-Dv_{i,j}|^{p-1}\,dx \\
& \le \left[c\varrho_{i}^{\alpha-n(q-p)/\kappa}\|Dw_{i,j}\|_{L^{\kappa}(4B_i)}^{q-p} + \frac{\tilde{c}}{K^{p(p-1)/(2q)}}\right]\mean{4B_i}|Dw_{i,j}|^{p-1}\,dx,
\end{aligned}
\end{equation}
where $c\equiv c(\data, \|Dw_{i,j}\|_{L^{\kappa}(4B_i)}, K)$. 
In a similar way, we also have
\begin{equation*}
\begin{aligned}
& \mean{2B_i}a(\tilde{x}_{i})|Dw_{i,j}-Dv_{i,j}|^{q}\,dx \\
\overset{\eqref{Vprop}}&{\le} c\mean{2B_i}a(\tilde{x}_{i})|V_{q}(Dw_{i,j})-V_{q}(Dv_{i,j})|^{2}\,dx + c\mean{2B_i}a(\tilde{x}_{i})|Dw_{i,j}|^{q}\,dx \\
\overset{\eqref{p.phase.I}}&{\le} c\mean{2B_i}a(\tilde{x}_{i})|Dw_{i,j}|^{q-1}|Dw_{i,j}-Dv_{i,j}|\,dx + c\mean{2B_i}a(\tilde{x}_{i})|Dw_{i,j}|^{q}\,dx \\
\overset{\text{Young}}&{\le} \frac{1}{2}\mean{2B_i}a(\tilde{x}_{i})|Dw_{i,j}-Dv_{i,j}|^{q}\,dx + c\mean{2B_i}a(\tilde{x}_{i})|Dw_{i,j}|^{q}\,dx
\end{aligned}
\end{equation*}
and so
\begin{equation}\label{p.phase.start}
\mean{2B_i}a(\tilde{x}_{i})|Dw_{i,j}-Dv_{i,j}|^{q}\,dx \le c\mean{2B_i}a(\tilde{x}_{i})|Dw_{i,j}|^{q}\,dx,
\end{equation}
where $c\equiv c(n,p,q,\nu,L)$. 
Subsequently, we estimate
\begin{equation*}
\begin{aligned}
\mean{2B_{i}}a(\tilde{x}_{i})|Dw_{i,j}-Dv_{i,j}|^{q-1}\,dx &\le [a(\tilde{x}_{i})]^{1/q}\left(\mean{2B_i}a(\tilde{x}_{i})|Dw_{i,j}-Dv_{i,j}|^{q}\,dx\right)^{(q-1)/q} \\
\overset{\eqref{p.phase.start}}& {\le} ca(\tilde{x}_{i})\left(\mean{2B_{i}}|Dw_{i,j}|^{q}\,dx\right)^{(q-1)/q} \\
\overset{\eqref{p.phase.am},\eqref{higher.est}}& {\le} c\varrho_{i}^{\alpha}\left(\mean{3B_{i}}|Dw_{i,j}|^{\kappa}\,dx\right)^{(q-1)/\kappa} \\
& = c\varrho_{i}^{\alpha}\left(\mean{3B_{i}}|Dw_{i,j}|^{\kappa}\,dx\right)^{(q-p)/\kappa + (p-1)/\kappa} \\
& \le c\varrho_{i}^{\alpha-n(q-p)/\kappa}\|Dw_{i,j}\|_{L^{\kappa}(3B_i)}^{q-p}\left(\mean{3B_{i}}|Dw_{i,j}|^{\kappa}\,dx\right)^{(p-1)/\kappa} \\
\overset{\eqref{higher.est}}&{\le} c\varrho_{i}^{\alpha-n(q-p)/\kappa}\|Dw_{i,j}\|_{L^{\kappa}(4B_i)}^{q-p}\mean{4B_i}|Dw_{i,j}|^{p-1}\,dx,
\end{aligned}
\end{equation*}
where $c\equiv c(\data,\|Dw_{i,j}\|_{L^{\kappa}(4B_i)}, K)$. Combining this inequality with \eqref{p.phase.p-1}, and using the fact that \eqref{R0.ini},  \eqref{comp.young} (with $\varepsilon=1$), \eqref{approx.Du.2} and \eqref{approx.mu} imply
\begin{equation}\label{Dw.data}
\begin{aligned}
\|Dw_{i,j}\|_{L^{\kappa}(4B_i)} & \le \|Dw_{i,j}-Du_j\|_{L^{\kappa}(4B_i)} + \|Du_j\|_{L^{\kappa}(4B_i)} \\
& \le c\varrho_{i}^{n-\kappa(n-1)/(p-1)}[|\mu_j|(4B_i)]^{\kappa/(p-1)} + \|Du_j\|_{L^{\kappa}(4B_i)} \\
& \le c(\data, \|Du\|_{L^{\kappa}(\Omega)}),
\end{aligned}
\end{equation} 
we arrive at
\begin{equation}\label{conclusion.p}
\mean{2B_i}h(\tilde{x}_{i} ,|Dw_{i,j}-Dv_{i,j}|)\,dx \le \left[ \frac{\tilde{c}}{K^{p(p-1)/(2q)}} + c_{K}\varrho_{i}^{\sigma} \right]\mean{4B_{i}}h(x,|Dw_{i,j}|)\,dx
\end{equation}
for some constants $\tilde{c}\equiv \tilde{c}(n,p,q,\nu,L)$ and $c_{K}\equiv c_{K}(\data, \|Du\|_{L^{\kappa}(\Omega)}, K)$, where
\begin{equation}\label{def.sigma} 
\sigma \coloneqq \alpha-\frac{n(q-p)}{\kappa} > 0 \quad \text{by }\, \eqref{kappa.range}. 
\end{equation}

\textit{Step 7: Matching the two phases and comparison estimates. }
Combining \eqref{conclusion.pq} and \eqref{conclusion.p}, we have that
\begin{equation*}
\begin{aligned}
\mean{2B_i}h(\tilde{x}_{i},|Dw_{i,j}-Dv_{i,j}|)\,dx 
\le \left[\frac{\tilde{c}}{K^{p(p-1)/(2q)}} + c_{K}\varrho_{i}^{\sigma}\right]\mean{4B_i}h(x,|Dw_{i,j}|)\,dx
\end{aligned}
\end{equation*}
holds with $\tilde{c}\equiv \tilde{c}(n,p,q,\nu,L)$ and $c_{K} \equiv c_{K}(\data, \|Du\|_{L^{\kappa}(\Omega)}, K)$, in both cases $\eqref{alt.comp2}_1$ and $\eqref{alt.comp2}_2$. 
Here, if $R_0$ is so small that
\begin{equation}\label{R0.ini2}
c_{K}R_{0}^{\sigma} \le 1,
\end{equation}
then
\begin{equation*}
\begin{aligned}
& \mean{2B_i}h(\tilde{x}_{i},|Dw_{i,j}-Dv_{i,j}|)\,dx \\
& \le \max\left\{2^{q-2},1\right\}\left[\frac{\tilde{c}}{K^{p(p-1)/(2q)}} + c_{K}\varrho_{i}^{\sigma}\right]\left(\mean{4B_i}h(x,|Dw_{i,j}-Du_j|)\,dx + \mean{4B_i}h(x,|Du_j|)\,dx\right) \\
\overset{\eqref{comp1}}&{\le} c\left[\frac{|\mu|(\overline{8B_i})}{(8\varrho_i)^{n-1}}\right] + \max\left\{2^{q-2},1\right\}\left[\frac{\tilde{c}}{K^{p(p-1)/(2q)}} + c_{K}\varrho_{i}^{\sigma}\right]\mean{8B_i}h(x,|Du_{j}|)\,dx 
\end{aligned}
\end{equation*}
holds with $c\equiv c(n,p,q,\nu,L)$, as $K \ge 4$.
Combining the above display with \eqref{comp1}, we arrive at
\begin{equation}\label{comp.comb}
\begin{aligned}
& \mean{2B_i}h(x,|Du_{j}-Dv_{i,j}|)\,dx \\
& \le \max\left\{2^{q-2},1\right\}\left(\mean{2B_i}h(x,|Du_j - Dw_{i,j}|)\,dx + \mean{2B_i}h(\tilde{x}_{i},|Dw_{i,j}-Dv_{i,j}|)\,dx\right) \\
& \le  c_{*}\varepsilon_{0}^{\frac{p-2}{p-1}}\left[\frac{|\mu|(\overline{8B_i})}{(8\varrho_i)^{n-1}}\right] + \max\left\{2^{2(q-2)},1\right\}\left[\frac{\tilde{c}}{K^{p(p-1)/(2q)}} + c_{K}R_{0}^{\sigma} + \varepsilon_{0} \right]\mean{8B_i}h(x,|Du_j|)\,dx
\end{aligned}
\end{equation}
whenever $\varepsilon_0 \in (0,1]$, where $c_{*}\equiv c_{*}(\data,\|Du\|_{L^{\kappa}(\Omega)})$, $\tilde{c}\equiv \tilde{c} (n,p,q,\nu,L)$ and $ c_{K} \equiv c_{K} (\data , \|Du\|_{L^{\kappa}(\Omega)} , K)$. 
With these constants and $c_{0} \equiv c_{0}(n)$ given in \eqref{ineq.M1},  we now introduce the notation
\begin{equation}\label{def.S}
 S(\varepsilon_0, R_0, K,M) \coloneqq \max\left\{2^{4(q-2)},1\right\}\left[\frac{\tilde{c}}{K^{p(p-1)/(2q)}} + c_{K}R_{0}^{\sigma} + \varepsilon_{0} \right] + \frac{c_{0}c_{*}\varepsilon_{0}^{(p-2)/(p-1)}+1}{M}.
\end{equation}
Then, in light of \eqref{ineq.M1}, \eqref{lambda.range}, \eqref{exit2}, \eqref{approx.Du}, and \eqref{comp.comb}, we have that
\begin{equation}\label{final.comp}
\mean{2B_i}h(x,|Du_{j}-Dv_{i,j}|)\,dx \le S(\varepsilon_0, R_0,K,M)\lambda
\end{equation}
holds for any ball $B_i$ from the covering given in \eqref{def.Bi}. 

\textit{Step 8: The two phases at a different threshold. }
Here we show that
\begin{equation}\label{h.Dv.lambda}
\mean{2B_i}h(\tilde{x}_{i},|Dv_{i,j}|)\,dx  \le c\lambda
\end{equation}
holds for a constant $c\equiv c(\data,\|Du\|_{L^{\kappa}(\Omega)})$.  To this aim, we start by estimating
\begin{equation*}
\mean{2B_i}h(\tilde{x}_{i},|Dv_{i,j}|)\,dx \le c\mean{2B_i}h(\tilde{x}_{i},|Dw_{i,j}-Dv_{i,j}|)\,dx + c\mean{2B_i}h(\tilde{x}_{i},|Dw_{i,j}|)\,dx.
\end{equation*}
Then, we consider the following two alternatives:
\begin{equation}\label{alt.2Bi}
\inf_{x \in 2B_i}a(x) > 10[a]_{\alpha}\varrho_{i}^{\alpha} \qquad \text{and} \qquad \inf_{x\in 2B_i}a(x) \le 10[a]_{\alpha}\varrho_{i}^{\alpha},
\end{equation}
which are nothing but $\eqref{alt.comp2}_{1}$ and $\eqref{alt.comp2}_{2}$, respectively, with $K=10$. 

If $\eqref{alt.2Bi}_{1}$ holds, then we have
\begin{equation*}
\mean{2B_i}h(\tilde{x}_{i},|Dw_{i,j}-Dv_{i,j}|)\,dx \overset{\eqref{conclusion.pq}}{\le} c\mean{4B_i}h(x,|Dw_{i,j}|)\,dx
\end{equation*}
and
\begin{equation}\label{h.Dw.lambda}
\begin{aligned}
& \mean{2B_i}h(\tilde{x}_{i},|Dw_{i,j}|)\,dx \overset{\eqref{a.equiv}}{\le} c\mean{4B_i}h(x,|Dw_{i,j}|)\,dx \\
& \le c\mean{4B_i}h(x,|Du_j - Dw_{i,j}|)\,dx + c\mean{4B_i}h(x,|Du_j|)\,dx \\
\overset{\eqref{comp1}}& {\le} c\left[\frac{|\mu|(\overline{8B_i})}{(8\varrho_i)^{n-1}}\right] + c\mean{8B_i}h(x,|Du-Du_j|)\,dx + c\mean{8B_i}h(x,|Du|)\,dx \\
& \le c\lambda
\end{aligned}
\end{equation}
for some $c\equiv c(\data,\|Du\|_{L^{\kappa}(\Omega)})$, where we have also used \eqref{ineq.M1}, \eqref{def.lambda0}, \eqref{lambda.range}, and \eqref{approx.Du} for the last inequality. 
The above two displays lead to \eqref{h.Dv.lambda}.

If $\eqref{alt.2Bi}_{2}$ holds, then we estimate
\begin{equation*}
\begin{aligned}
a(\tilde{x}_{i})\mean{2B_i}|Dw_{i,j}|^{q-1}\,dx \overset{\eqref{p.phase.am},\eqref{higher.est}}&{\le} c\varrho_{i}^{\alpha}\left(\mean{3B_i}|Dw_{i,j}|^{\kappa}\,dx\right)^{(q-1)/\kappa} \\
& = c\varrho_{i}^{\alpha}\left(\mean{3B_i}|Dw_{i,j}|^{\kappa}\,dx\right)^{(q-p)/\kappa + (p-1)/\kappa} \\
& \le c\varrho_{i}^{\alpha-n(q-p)/\kappa}\|Dw_{i,j}\|^{q-p}_{L^{\kappa}(3B_i)}\left(\mean{3B_i}|Dw_{i,j}|^{\kappa}\,dx\right)^{(p-1)/\kappa} \\
\overset{\eqref{kappa.range},\eqref{Dw.data},\eqref{higher.est}}&{\le} c\mean{4B_i}|Dw_{i,j}|^{p-1}\,dx
\end{aligned}
\end{equation*}
for some $c\equiv c(\data,\|Du\|_{L^{\kappa}(\Omega)})$, which gives \eqref{h.Dw.lambda} in this case as well.
Using this inequality and \eqref{conclusion.p}, we again conclude with \eqref{h.Dv.lambda}.

\textit{Step 9: A priori estimates for $Dv_{i,j}$. } 
Note that estimate \eqref{h.Dv.lambda} is independent of the cases $\eqref{alt.2Bi}_1$ and $\eqref{alt.2Bi}_2$ coming into the play.  We then apply Lemma \ref{lem.Lip} to $v_{i,j}$, with the choice $a_{0} \equiv a(\tilde{x}_{i})$, which gives
\begin{equation*}
 \sup_{B_i}h(\tilde{x}_{i},|Dv_{i,j}|) \le c\mean{2B_i}h(\tilde{x}_{i},|Dv_{i,j}|)\,dx \le c_{l}\lambda
 \end{equation*}
for some $c_{l}\equiv c_{l}(\data,\|Du\|_{L^{\kappa}(\Omega)})$. 
This together with the definition of $\tilde{x}_{i}$ implies
\begin{equation}\label{lip.lambda}
\sup_{B_i}h(\cdot,|Dv_{i,j}|) \le c_{l}\lambda.
\end{equation}

\textit{Step 10: Estimates on upper level sets. } 
Here and in the following, we use the notation
\[ c_{q} \coloneqq \max\left\{3^{q-2},1\right\}. \] 
By using \eqref{lip.lambda}, we have
\begin{equation*}
\begin{aligned}
& c_{q}c_{l} \lambda 
|E( B_{i} , 2c_{q}c_{l}\lambda )|
+ \frac{1}{2} \int_{E( B_{i} , 2c_{q}c_{l} \lambda )}
h(x,|Du|)\,dx \\
& \le \int_{E( B_{i} , 2c_{q}c_{l} \lambda )} h(x,|Du|)\,dx \\
& \le c_{q}\int_{B_i}h(x,|Du-Du_{j}|)\,dx + c_{q}\int_{B_i}h(x,|Du_{j}-Dv_{i,j}|)\,dx  + c_{q}\int_{E( B_{i} , 2c_{q}c_{l} \lambda )} h(x,|Dv_{i,j}|)\,dx \\
& \le c_{q}\int_{B_i}h(x,|Du-Du_j|)\,dx + c_{q}\int_{B_i}h(x,|Du_j - Dv_{i,j}|)\,dx  + c_{q}c_{l} \lambda| E( B_{i} , 2c_{q}c_{l} \lambda ) |
\end{aligned}
\end{equation*}
and therefore
\begin{equation*}
\begin{aligned}
& \int_{E( B_{i} , 2c_{q}c_{l} \lambda )} h(x,|Du|)\,dx \\
& \le 2c_{q}\int_{B_i}h(x,|Du-Du_j|)\,dx + 2c_{q}\int_{B_i}h(x,|Du_j - Dv_{i,j}|)\,dx \\
& \le 2c_{q}|8B_i|\left(\mean{8B_i}h(x,|Du-Du_{j}|)\,dx + \mean{2B_i}h(x,|Du_{j}-Dv_{i,j}|)\,dx\right).
\end{aligned}
\end{equation*}
Applying \eqref{approx.Du} and \eqref{final.comp} to each term in the right-hand side, we arrive at
\begin{equation}\label{hDu.Bi}
\int_{E( B_{i} , 2c_{q}c_{l} \lambda )} h(x,|Du|)\,dx \le 80^{n}c_{q}S(\varepsilon_0,R_0,K,M)\lambda|\tilde{B}_i|.
\end{equation}

We now establish a uniform estimate for $|\tilde{B}_i|$. Observe that $\eqref{exit2}_{1}$ implies
\begin{equation*}
|\tilde{B}_i| = \frac{1}{\lambda}\int_{\tilde{B}_i}[h(x,|Du|) + M\mathbf{M}_{1}(\mu)]\,dx.
\end{equation*}
In particular, either
\begin{equation}\label{hm}
\frac{1}{\lambda}\int_{\tilde{B}_i}h(x,|Du|)\,dx \ge \frac{1}{2}|\tilde{B}_i| \qquad \text{or} \qquad \frac{1}{\lambda}\int_{\tilde{B}_i}M\mathbf{M}_{1}(\mu)\,dx \ge \frac{1}{2}|\tilde{B}_i|
\end{equation}
must hold. If $\eqref{hm}_1$ holds, then we have
\begin{equation*} 
|\tilde{B}_i| \le \frac{2}{\lambda}\int_{\tilde{B}_i}h(x,|Du|)\,dx 
\le \frac{2}{\lambda}\int_{E( \tilde{B}_{i} , \lambda/4 )} h(x,|Du|) \, dx + \frac{1}{2} |\tilde{B}_i|
\end{equation*}
and so
\[ |\tilde{B}_i| \le \frac{4}{\lambda}\int_{E(\tilde{B}_i , \lambda/4)}h(x,|Du|)\,dx. \]
Similarly, if $\eqref{hm}_2$ holds, then we have
\[ |\tilde{B}_i| \le \frac{4}{\lambda}\int_{\mathcal{E}(\tilde{B}_i , \lambda/(4M))} M\mathbf{M}_1(\mu)\,dx. \]
The above two displays imply
\begin{equation*}
|\tilde{B}_i| \le \frac{4}{\lambda}\int_{E(\tilde{B}_i , \lambda/4)} h(x,|Du|)\,dx 
+ \frac{4}{\lambda}\int_{\mathcal{E}(\tilde{B}_i , \lambda/(4M))} M\mathbf{M}_{1}(\mu)\,dx.
\end{equation*}
Connecting this inequality to \eqref{hDu.Bi}, we arrive at
\begin{equation}\label{upper.1st}
\begin{aligned}
\int_{E(B_i , 2 c_{q}c_{l}\lambda )}h(x,|Du|)\,dx 
& \le 4\cdot80^{n}c_{q}S(\varepsilon_0,R_0,K,M)\int_{ E( \tilde{B}_i , \lambda/4 )}h(x,|Du|)\,dx \\
& \quad + 4\cdot80^{n}c_{q}S(\varepsilon_0,R_{0},K,M)\int_{\mathcal{E}( \tilde{B}_i , \lambda/(4M) )}M\mathbf{M}_{1}(\mu)\,dx.
\end{aligned}
\end{equation}
We then recall \eqref{covering} and the fact that the balls $\{\tilde{B}_i\}$ are disjoint. Accordingly, after changing variables, we sum up \eqref{upper.1st} over the covering $\{B_i\}$ in order to have
\begin{equation}\label{upper.2nd}
\begin{aligned}
\int_{E(B_{r_1},\lambda)}h(x,|Du|)\,dx & \le 4\cdot80^{n}c_{q}S(\varepsilon_0,R_0,K,M)\int_{E(B_{r_2},\lambda/(8c_{q}c_{l}))}h(x,|Du|)\,dx \\
& \quad + 4\cdot80^{n}c_{q}S(\varepsilon_0,R_0,K,M)\int_{
B_{r_2}}M\mathbf{M}_{1}(\mu)\,dx
\end{aligned}
\end{equation}
whenever
\[ \lambda \ge 2c_{q}c_{l}\lambda_{0} = 2c_{q}c_{l}\frac{20^{n}r_{2}^{n}}{(r_{2}-r_{1})^n}\mean{B_{r_2}}[h(x,|Du|) + M\mathbf{M}_{1}(\mu)]\,dx. \]

\textit{Step 11: Integration and conclusion. }
We now conclude the proof by integrating on level sets.
Let us fix $\gamma \in (1, \infty)$.
We consider, for $t \ge 0$, the truncated function
\[ [h(x,|Du|)]_t \coloneqq \min\{h(x,|Du|), t\}. \]
For $t \ge 4c_{q}c_{l}\lambda_0$, we multiply \eqref{upper.2nd} by $\lambda^{\gamma-2}$ and then integrate the resulting estimate with respect to $\lambda$ in order to get
\begin{equation}\label{int.start}
\begin{aligned}
& \int_{2c_{q}c_{l}\lambda_0}^{t}\lambda^{\gamma-2}\int_{E(B_{r_{1}},\lambda)}h(x,|Du|)\,dx\,d\lambda \\
& \le 4\cdot80^{n}c_{q}S(\varepsilon_0,R_0,K,M)\int_{2c_{q}c_{l}\lambda_0}^{t}\lambda^{\gamma-2}\int_{E(B_{r_2},\lambda/(8c_{q}c_{l}))}h(x,|Du|)\,dx\,d\lambda \\
& \quad + 4\cdot80^{n}c_{q}S(\varepsilon_0,R_0,K,M)\int_{2c_{q}c_{l}\lambda_0}^{t}\lambda^{\gamma-2}\int_{\mathcal{E}(B_{r_2},\lambda/(8c_{q}c_{l}M))}M\mathbf{M}_1(\mu)\,dx\,d\lambda.
\end{aligned}
\end{equation}
Observe that Fubini's theorem implies
\begin{equation}\label{int.2nd}
\begin{aligned}
& \int_{2c_{q}c_{l}\lambda_0}^{t}\lambda^{\gamma-2}\int_{E(B_{r_{1}},\lambda)}h(x,|Du|)\,dx\,d\lambda \\
& = \frac{1}{\gamma-1}\int_{B_{r_1}}[h(x,|Du|)]^{\gamma-1}_{t}h(x,|Du|)\,dx - \int_{0}^{2c_{q}c_{l}\lambda_0}\lambda^{\gamma-2}\int_{E(B_{r_1},\lambda)}h(x,|Du|)\,dx\,d\lambda
\end{aligned}
\end{equation}
and, in a similar way, 
\begin{equation}\label{int.22}
\begin{aligned}
& \int_{2c_{q}c_{l}\lambda_0}^{t}\lambda^{\gamma-2}\int_{E(B_{r_2},\lambda/(8c_{q}c_l))}h(x,|Du|)\,dx\,d\lambda \\
& \le \frac{(8c_{q}c_l)^{\gamma-1}}{\gamma-1}\int_{B_{r_2}}[h(x,|Du|)]^{\gamma-1}_{t/(8c_{q}c_l)}h(x,|Du|)\,dx \\
& \le \frac{(8c_{q}c_l)^{\gamma-1}}{\gamma-1}\int_{B_{r_2}}[h(x,|Du|)]^{\gamma-1}_{t}h(x,|Du|)\,dx.
\end{aligned}
\end{equation}
Note that, in the above estimate, we have changed variables and used the inequality 
\[ [h(x,|Du|)]^{\gamma-1}_{t/(8c_{q}c_l)} \le [h(x,|Du|)]^{\gamma-1}_{t}, \]
which is valid since $8c_{q}c_l >1$. 
On the other hand, recalling the definition of $\lambda_0$ given in \eqref{def.lambda0}, the second term in the right-hand side of \eqref{int.2nd} is estimated as
\begin{equation}\label{int.lhs}
\begin{aligned}
\int_{0}^{2c_{q}c_{l}\lambda_0}\lambda^{\gamma-2}\int_{E(B_{r_1},\lambda)}h(x,|Du|)\,dx\,d\lambda & \le \int_{0}^{2c_{q}c_{l}\lambda_0}\lambda^{\gamma-2}\,d\lambda \int_{B_{r_2}}h(x,|Du|)\,dx \\
& \le \frac{(2c_{q}c_l)^{\gamma-1}\lambda_{0}^{\gamma-1}}{\gamma-1}\int_{B_{r_2}}h(x,|Du|)\,dx \\
& \le \frac{(2c_{q})^{\gamma-1}c_{l}^{\gamma-1}}{\gamma-1}\lambda_{0}^{\gamma}|B_{r_2}|.
\end{aligned}
\end{equation}
As for the integral of $\mathbf{M}_1(\mu)$, we again use Fubini's theorem to have
\begin{equation}\label{int.23}
\begin{aligned}
& \int_{2c_{q}c_{l}\lambda_0}^{t}\lambda^{\gamma-2}\int_{\mathcal{E}(B_{r_2},\lambda/(8c_{q}c_l M))}\mathbf{M}_1(\mu)\,dx\,d\lambda \\
& \le \int_{0}^{\infty}\lambda^{\gamma-2}\int_{\mathcal{E}(B_{r_2},\lambda/(8c_{q}c_l M))}\mathbf{M}_1(\mu)\,dx\,d\lambda \le \frac{(8c_{q}c_l M)^{\gamma-1}}{\gamma-1}\int_{B_{r_2}}[\mathbf{M}_1(\mu)]^{\gamma}\,dx.
\end{aligned}
\end{equation}
Plugging \eqref{int.2nd}, \eqref{int.22}, \eqref{int.lhs} and \eqref{int.23} into \eqref{int.start}, and then performing elementary manipulations, we arrive at
\begin{equation}\label{int.3rd}
\begin{aligned}
& \mean{B_{r_1}}[h(x,|Du|)]^{\gamma-1}_{t}h(x,|Du|)\,dx \\
& \le c_{f}^{\gamma}c_{l}^{\gamma-1}S(\varepsilon_0,R_0,K,M)\mean{B_{r_2}}[h(x,|Du|)]^{\gamma-1}_{t}h(x,|Du|)\,dx \\
& \quad + c_{f}^{\gamma}c_{l}^{\gamma-1}M^{\gamma-1}S(\varepsilon_0,R_0,K,M)\mean{B_{r_2}}[\mathbf{M}_1(\mu)]^{\gamma}\,dx + c_{f}^{\gamma}c_{l}^{\gamma-1}\lambda_{0}^{\gamma},
\end{aligned}
\end{equation}
where $c_{l}\equiv c_{l}(\data,\|Du\|_{L^{\kappa}(\Omega)}) \ge 1$ is the constant given in \eqref{lip.lambda} and $c_{f} \equiv c_{f}(n,q) \ge 1$; note that we have also used the inequality $|B_{r_2}|/|B_{r_1}| \le 2^n$.

We now deal with the quantity $S(\varepsilon_0,R_0,K,M)$ defined in \eqref{def.S}; note that all the above estimates are valid for any choices of $R_0 \in (0,1)$ satisfying $R_0 \le R_*$ (recall \eqref{R0.ini}) and \eqref{R0.ini2}, $K \ge 4$ and $M \ge 1$, with all the constants, except for the ones appearing in the definition  of $S(\varepsilon_0,R_0,K,M)$, remaining uniformly bounded and ultimately depending only on $\data$ and $\|Du\|_{L^{\kappa}(\Omega)}$.
Here we fix $\varepsilon_0$, $R_0$, $K$ and $M$ in order to have 
\begin{equation}\label{fix.para}
c_{f}^{\gamma}c_{l}^{\gamma-1}S(\varepsilon_0,R_0,K,M) \le \frac{1}{2}.
\end{equation}
More precisely, we first fix $K \equiv K(\data, \|Du\|_{L^{\kappa}(\Omega)} ,\gamma) \ge 4$ and $\varepsilon \equiv \varepsilon(\data, \|Du\|_{L^{\kappa}(\Omega)} ,\gamma) \in (0,1)$ as
\begin{equation}\label{fix.K}
K \coloneqq \left(2^{4q}c_{f}^{\gamma}c_{l}^{\gamma}\tilde{c}\right)^{2q/[p(p-1)]} \qquad \text{and} \qquad \varepsilon = \frac{1}{2^{4q}c_{f}^{\gamma}c_{l}^{\gamma}},
\end{equation}
where $\tilde{c}\equiv \tilde{c}(n,p,q,\nu,L)$ appears in \eqref{comp.comb}. This in turn determines the constants $c_K$ and $c_{\varepsilon}$ in \eqref{comp.comb} as functions of $\data, \|Du\|_{L^{\kappa}(\Omega)}$ and $\gamma$ only. We then choose $M \equiv M(\data, \|Du\|_{L^{\kappa}(\Omega)} ,\gamma) \ge 1$ as
\begin{equation}\label{fix.M}
M \coloneqq 16c_{f}^{\gamma}c_{l}^{\gamma}c_{0}c_{*}\varepsilon_{0}^{(p-2)/(p-1)},
\end{equation}
where $c_{*} \equiv c_{*}(\data,\|Du\|_{L^{\kappa}(\Omega)})$ and and $c_{0} \equiv c_{0}(n)$ are given in \eqref{comp.comb} and \eqref{ineq.M1}, resepectively.
We finally choose $R_0 \equiv R_0 (\data,|\mu|(\Omega), \|Du\|_{L^{\kappa}(\Omega)},\gamma) \in (0,1)$ small enough to have $R_0 \le R_*$  and 
\begin{equation}\label{fix.R0}
R_0 \le \left(\frac{1}{2^{4q}c_{f}^{\gamma}c_{l}^{\gamma}c_{K}}\right)^{1/\sigma},
\end{equation}
where $\sigma \equiv \sigma (n,p,q,\alpha)$ is given in \eqref{def.sigma}; in particular, we also have \eqref{R0.ini2}. 
The choices of parameters in \eqref{fix.K}, \eqref{fix.M}, and \eqref{fix.R0} yield \eqref{fix.para}. Combining this with \eqref{int.3rd} and the definition of $\lambda_0$ in \eqref{def.lambda0}, we obtain
\[
\begin{aligned}
& \mean{B_{r_1}}[h(x,|Du|)]^{\gamma-1}_{t}h(x,|Du|)\,dx \\
& \le \frac{1}{2}\mean{B_{r_2}}[h(x,|Du|)]^{\gamma-1}_{t}h(x,|Du|)\,dx + c_{\gamma}\mean{B_{2R}}[\mathbf{M}_1(\mu)]^{\gamma}\,dx \\
& \quad + \frac{c^{\gamma}R^{n\gamma}}{(r_2 - r_1)^{n\gamma}}\left(\mean{B_{2R}}[h(x,|Du|)+M\mathbf{M}_1(\mu)]\,dx \right)^{\gamma},
\end{aligned}
\]
where $c\equiv c(\data, \|Du\|_{L^{\kappa}(\Omega)})$ and $c_{\gamma} \equiv c_{\gamma}(\data, \|Du\|_{L^{\kappa}(\Omega)} ,\gamma)$.
 Note in particular that the constants in the above estimate are independent of the parameter $t \ge 4c_{q}c_{l}\lambda_0$. 

We are now able to apply Lemma \ref{tech.lemma} with the choices $\ell = n\gamma$ and
\[ Z(s) \coloneqq \mean{B_s}[h(x,|Du|)]^{\gamma-1}_{t}h(x,|Du|)\,dx, \qquad R \le s \le 2R, \]
which is obviously a bounded function. Then, making a few elementary manipulations together with H\"older's inequality and recalling that $M \ge 1$ has been determined in \eqref{fix.M} as a function of $\data, \|Du\|_{L^{\kappa}(\Omega)}$ and $\gamma$ only, we arrive at
\[ \mean{B_R}[h(x,|Du|)]^{\gamma-1}_{t}h(x,|Du|)\,dx \le c^{\gamma}\left(\mean{B_{2R}}h(x,|Du|)\,dx\right)^{\gamma} + c_{\gamma}\mean{B_{2R}}[\mathbf{M}_1(\mu)]^{\gamma}\,dx. \]
Letting $t\to\infty$ in the above display, we conclude with
\[ \mean{B_R}[h(x,|Du|)]^{\gamma}\,dx \le c^{\gamma}\left(\mean{B_{2R}}h(x,|Du|)\,dx\right)^{\gamma} + c_{\gamma}\mean{B_{2R}}[\mathbf{M}_1(\mu)]^{\gamma}\,dx \]
whenever $R \le R_0$ with $R_0 \equiv R_0(\data,|\mu|(\Omega), \|Du\|_{L^{\kappa}(\Omega)}, \|Du\|_{L^{\gamma_0}(\Omega)}, \gamma) \in (0,1)$ satisfying $R_0 \le R_*$  and \eqref{fix.R0}, and where $c\equiv c(\data, \|Du\|_{L^{\kappa}(\Omega)})$ and $c_{\gamma}\equiv c_{\gamma}(\data, \|Du\|_{L^{\kappa}(\Omega)}, \gamma)$. This shows \eqref{main.est}, which along with a standard covering argument leads to \eqref{cz.implication}. The proof is finally complete. \qed

\section*{Declarations}

\subsection*{Acknowledgments} 

K. Song was supported by a KIAS individual grant (MG091702) at Korea Institute for Advanced Study.
Y. Youn was supported by National Research Foundation of Korea (NRF) grant funded by the Korean government [Grant No. RS-2025-00555316]. 
A. Zatorska-Goldstein was supported by the funding from the University of Warsaw in the framework of the Thematic Research Program \textit{Geometric Analysis: Methods and Applications GAMA25}

\subsection*{Conflict of interest} On behalf of all authors, the corresponding author states that there is no conflict of interest.

\subsection*{Data availability} Data sharing is not applicable to this article as no datasets were generated or analysed during the current study.

\end{document}